\newif\ifPDF
\newtheorem{theorem}{Theorem}[section]
\newtheorem{lemma}[theorem]{Lemma}
\newtheorem{definition}[theorem]{Definition}
\newtheorem{remark}[theorem]{Remark}
\newcommand{\sgn}{\operatorname{sgn}}
\newcommand{\sfc}{\mathsf c}
 \newcommand{\bbN}{\mathbb N}
\newcommand{\bbR}{\mathbb R} \newcommand{\bbS}{\mathbb S}
\newcommand{\btheta}{{\boldsymbol\theta}}
\newcommand{\ba}{\mathbf a} \newcommand{\bb}{\mathbf b}
 \newcommand{\bn}{\mathbf n}
 \newcommand{\br}{\mathbf r}
 \newcommand{\bv}{\mathbf v} 
 \newcommand{\bx}{\mathbf x} 
\newcommand{\by}{\mathbf y} \newcommand{\bz}{\mathbf z}
\newcommand{\cA}{\mathcal A} \newcommand{\cB}{\mathcal B}
\newcommand{\cG}{\mathcal G} 
\newcommand{\cI}{\mathcal I} 
\newcommand{\cK}{\mathcal K} \newcommand{\cL}{\mathcal L}
\newcommand{\cO}{\mathcal O}
\newcommand{\aver}[1]{\langle {#1} \rangle}
\newenvironment{keywords}
{\noindent{\bf Key words.}\small}{\par\vspace{1ex}}
\newenvironment{AMS}
{\noindent{\bf AMS subject classifications 2010.}\small}{\par}
\title{A fast algorithm for time-dependent radiative transport equation based on integral formulation}
\author{
    Hongkai Zhao\thanks{
        Department of Mathematics, University of California, Irvine, CA 92697; zhao@uci.edu}
    \and
    Yimin Zhong\thanks{
        Department of Mathematics, University of California, Irvine, CA 92697; yiminz@uci.edu}
}
\begin{document}
\maketitle



\begin{abstract}
    In this work, we introduce a fast numerical algorithm to solve the time-dependent radiative transport equation (RTE). Our method uses the integral formulation of RTE and applies the treecode algorithm to reduce the computational complexity from $\cO(M^{2+1/d})$ to $\cO(M^{1+1/d}\log M)$, where $M$ is the number of points in the physical domain. The error analysis is presented and numerical experiments are performed to validate our algorithm. 
\end{abstract}


\begin{keywords}
    radiative transport equation, volume integral equation, treecode algorithm
\end{keywords}


\begin{AMS}
    45K05, 65N22, 65N99, 65R20, 65Y10
\end{AMS}

\section{Introduction}
The radiative transport model plays an important role in quantitative modeling and analysis of particle transport processes in many physical and biological applications such as astrophysics~\cite{cecchi1999radiative,henyey1941diffuse}, nuclear engineering~\cite{larsen1988neutronics,mokhtar1997mathematical}, biomedical optics~\cite{arridge2009optical,ren2015inverse,bal2016ultrasound, zhao2019instability,li2019inverse}, radiation therapy~\cite{jia2012gpu,tillikainen20083d}. In this paper, we consider the numerical solution to the time-dependent radiative transport equation (RTE) with isotropic scattering kernel:
\begin{equation}\label{EQ: TIME RTE}
    \begin{aligned}
        u_t(t, \bx, \bv) + \left[\bv\cdot\nabla +\sigma_t(\bx) \right]u(t, \bx, \bv) &= \sigma_s(\bx) \aver{ u}(t, \bx) + f(t, \bx)&\text{in }& (0,T]\times \Omega\times\bbS^{d-1}  \,, \\
                u(t, \bx, \bv) & = 0 &\text{on }&   \{0\}\times \Omega\times\bbS^{d-1}\,,\\
        u(t, \bx, \bv) &= 0 &\text{on }& (0,T]\times \Gamma_{-}\,.
    \end{aligned}
\end{equation}
where the space $\Omega\subset \bbR^d$ is a convex domain with smooth boundary $\partial\Omega$, $\bbS^{d-1}$ denotes the unit sphere in $\bbR^d$. $\Gamma_{-} := \{(\bx, \bv)\in \partial \Omega\times \bbS^{d-1}\mid \bv\cdot \bn_{\bx} < 0\}$ ($\bn_{\bx}$ being the
unit outward normal at $\bx\in\partial\Omega$) is the incoming boundary set. $\sigma_t(\bx)$ and $\sigma_s(\bx)$ are the total absorption and scattering coefficients, respectively. Physically speaking, the coefficient $\sigma_s(\bx)$ represents the strength of the scattering of the underlying medium at $\bx\in\Omega$ and $\sigma_a(\bx):=\sigma_t(\bx) - \sigma_s(\bx)$ represents the strength of absorption of the medium. $f(t,\bx)$ is a time-dependent isotropic  source function (which is not dependent on $\bv$). The quantity $\aver{u}(t,\bx)$ is defined by
\begin{equation}
    \aver{u}(t,x) := \int_{\bbS^{d-1}} u(t, \bx, \bv') d\bv'\,,
\end{equation}
where $d\bv'$ is the \emph{normalized} surface measure on $\bbS^{d-1}$. For the sake of simplicity, we have assumed there is no incoming source on the boundary, and the solution $u(t,\bx, \bv)$ is zero at $t = 0$.


The analytic solutions for the time-dependent RTE~\eqref{EQ: TIME RTE} have only been found in special setup, such as for homogeneous infinite or semi-infinite geometries~\cite{liemert2011analytical,paasschens1997solution, elaloufi2002time}, and layered media~\cite{liemert2017analytical}. Numerical methods for solving~\eqref{EQ: TIME RTE} has been extensively
explored, see~\cite{guo2002monte,tan2001integral,larsen1988grey,graziani2006computational,lewis1984computational} and references therein for an overview. These numerical algorithms are mainly based on stochastic Monte Carlo~\cite{guo2002monte,gentile2001implicit,mcclarren2009modified}, discrete ordinate~\cite{guo2002three,chai2004three,gaggioli2019light, ling2018conservative,kim2002chebyshev}, or $P_N$ formulation~\cite{powell2019pseudospectral, egger2016class}. The most challenging issue for solving the RTE numerically is due to the high dimensionality of the phase space that includes both physical and angular dimensions. 
Regarding time-independent problems, 
one of the popular ways is based on the integral formulation to remove the angular variable by computing the angular moments~\cite{ren2019fast, fan2019fast,ren2019separability}. 
For isotropic scattering media, the fast algorithms based on fast multipole method~\cite{ren2019fast} and low rank matrix factorization~\cite{fan2019fast} were developed. For anisotropic scattering media, a truncated coupled system of integral equations for the angular moments of the transport solution were studied in~\cite{ren2019separability}. Particularly, for those highly separable scattering phase functions such as Rayleigh or linearly anisotropic cases, the integral formulation could solve the RTE very effectively by exploiting the low rank structure of integral kernels~\cite{ren2019separability}. Regarding time-dependent problems as~\eqref{EQ: TIME RTE}, the integral formulation for infinite homogeneous medium has been carried out in~\cite{tan2001integral,wu2000integral}, however the related fast algorithms have not been addressed yet.

In our work, we will pursue the integral formulation for angular averaged solution for time-dependent RTE and develop a fast solver based on the treecode algorithm for the resulting integral equation in space and time, which is more complicated due to the manifold structure, a conical surface, for the domain of dependence. We will briefly derive the integral formulation in Section \ref{sec:formulation} and provide a few mathematical preliminaries in Section \ref{sec:preliminary}. Then we present our fast algorithm including discretization, error analysis, and implementation details in Section \ref{sec:algorithm}. We provide numerical experiments in Section \ref{sec:test} and concluding remarks in Section \ref{sec:conclusion}.

\section{Integral formulation}
\label{sec:formulation}
In this section, we first briefly introduce the integral formulation for the time-dependent RTE~\eqref{EQ: TIME RTE}. Let 
\begin{equation}
\bz:= (t, \bx)\in \bbR^{d+1},\quad \btheta:= (1, \bv)\in \{1 \}\times \bbS^{d-1}\,.
\end{equation}
We slightly abuse the notations without causing any confusion that $u(\bz, \btheta) = u(t,\bx, \bv)$, $f(\bz) = f(t,\bx)$, $\sigma_s(\bz) = \sigma_s(\bx)$,  $\sigma_t(\bz) = \sigma_t(\bx)$. Let $R(\bz) =\sigma_s(\bz)\aver{u}(\bz) + f(\bz)$, then the RTE~\eqref{EQ: TIME RTE} can be formulated as a usual linear transport equation:
\begin{equation}\label{EQ: LINEAR TRANSPORT}
{\btheta}\cdot \nabla_{\bz} u(\bz, {\btheta}) + \sigma_t(\bz) u(\bz, {\btheta}) = R(\bz)
\end{equation}
with the initial and boundary conditions in~\eqref{EQ: TIME RTE}. Under the convention that $u(t,\bx, \bv) = 0$ and $f(t,\bx) = 0$ for $t < 0$, we can solve the linear transport equation~\eqref{EQ: LINEAR TRANSPORT} by
\begin{equation}\label{EQ: LINEAR TRANSPORT SOLUTION}
u(\bz, \btheta) = \int_0^{\tau_{-}(\bx, \bv)} \exp\left(-\int_0^r \sigma_t(\bx - s\bv) ds\right) R(\bz - r{\btheta}) dr\,,
\end{equation}
where $\tau_{-}(\bx, \bv)$ is the distance from the location $\bx$ to reach the boundary $\partial\Omega$ along the direction $-\bv$, which is:
\begin{equation}
\tau_{-}(\bx, \bv) := \sup\{r \mid \bx - r' \bv\in\Omega\;\text{  for  }\;0\le r' < r\}\,.
\end{equation}
Integrate both sides of~\eqref{EQ: LINEAR TRANSPORT SOLUTION} over the angular variable $\bv\in \bbS^{d-1}$, we will obtain
\begin{equation}\label{EQ: INT EQ LINE}
\begin{aligned}
\aver{u}(t,\bx)  =\int_0^{\tau_{-}(\bx, \bv)} \exp\left(-\int_0^r \sigma_t(\bx - s\bv) ds\right) R\left(t-r, \bx - r\bv\right)dr d\bv\,.
\end{aligned}
\end{equation} 
To further simplify the equation~\eqref{EQ: INT EQ LINE},  let $\by = \bx - r\bv$, which means $r = |\bx- \by|$ and $\bv = (\bx-\by)/|\bx - \by|$, and define the function $E(\bx, \by)$ as:
\begin{equation}
\begin{aligned}
E(\bx, \by) = \exp\left(-\int_0^{|\bx - \by|}\sigma_t\left(\bx - s\frac{\bx - \by}{|\bx - \by|}\right) ds\right)\,,
\end{aligned}
\end{equation}
which is the total attenuation due to absorption along the line segment between $\bx$ and $\by$ in $\Omega$. Use the transformation between Cartesian and polar coordinates,
\begin{equation}
d\by = \nu_{d-1}r^{d-1} dr d\bv\,,
\end{equation}
with $\nu_{d-1}$ as the surface area of the unit sphere $\bbS^{d-1}$, the equation~\eqref{EQ: INT EQ LINE} can be rewritten as 
\begin{equation}\label{EQ: INT EQ}
\aver{u}(t,\bx) = \frac{1}{\nu_{d-1}}\int_{\Omega} \frac{E(\bx, \by)}{|\bx - \by|^{d-1}} \left( \sigma_s\aver{u}(t - {|\bx - \by|}, \by)  + f(t-{|\bx - \by|}, \by)  \right) d\by\,.
\end{equation}
Geometrically, the equation~\eqref{EQ: INT EQ} describes that the solution $\aver{u}(t,\bx)$ is an integral over the conical surface formed by the characteristic lines in the cylinder $[0,T]\times\Omega$, see Fig~\ref{FIG: CONICAL}.
\begin{figure}[!htb]
    \centering
    \includegraphics[scale=0.15]{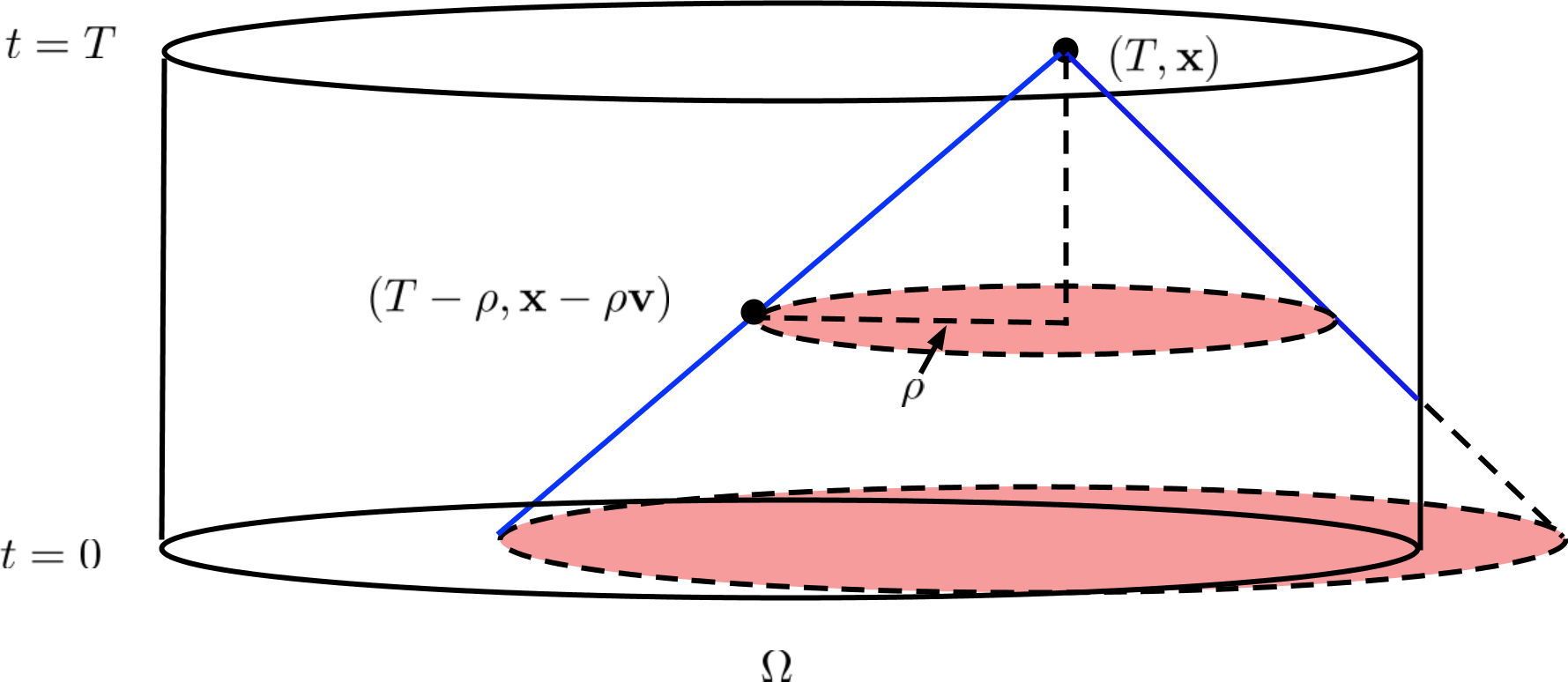}
    \caption{The characteristic line (blue) in the phase space}
    \label{FIG: CONICAL}
\end{figure}
Our main numerical algorithm will be based on the integral formulation~\eqref{EQ: INT EQ} for the angular averaged quantity $\aver{u}$. In the following context, we assume the coefficients  $\sigma_t(\bx), \sigma_s(\bx)$ and the source function $f(t,\bx)$ satisfy the following conditions.
\begin{enumerate}
    \item [$\cA$.] The coefficients $\sigma_s(\bx),\sigma_t(\bx)\in C^{1,{\alpha}}(\overline{\Omega})$ for some $\alpha\in[0,1]$, and there exist constants  $k_0$, $\underline{\sfc}$ and $\overline{\sfc}$ such that 
    \begin{equation}\nonumber
0\le \sup_{\Omega} \frac{\sigma_s}{\sigma_t} \le  k_0 <1,\quad 0 < \underline{\sfc}\le \sigma_s \le \overline{\sfc}\,.
    \end{equation}
    \item [$\cB$.] The source function $f\in C([0,T]\times\Omega)$ and $f(0,\cdot) \equiv 0$. There exists a constant $C_f$ that  satisfies 
    \begin{equation}
    |f(t, \bx) - f(s, \by)| \le C_f \left(\omega(|s-t|) + \omega(|\bx - \by|) \right),\forall (t,\bx), (s,\by)\in [0,T]\times\Omega\,,
    \end{equation}where $\omega(s):= s(1+|\log s|)$.
\end{enumerate}

\section{Mathematical Preliminaries} 
\label{sec:preliminary}
In this section, we provide a few basic but useful properties for the solution to the integral equation~\eqref{EQ: INT EQ}.  

\begin{lemma}\label{LEM: WELL}
    Suppose the assumption $\cA$ is satisfied and $f(t,\bx)\in L^{\infty}([0,T]\times\Omega)$, then there exists a unique solution $\aver{u}\in L^{\infty}([0,T]\times\Omega)$  to~\eqref{EQ: INT EQ}. Moreover, there exists a constant $C_0$ such that
    \begin{equation}
    \|\aver{u}\|_{L^{\infty}([0,T]\times\Omega)} \le C_0 \|f\|_{L^{\infty}([0,T]\times\Omega)}\,.
    \end{equation}
\end{lemma}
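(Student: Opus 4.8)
The plan is to recast the integral equation~\eqref{EQ: INT EQ} as a fixed-point problem and exhibit a contraction. Extending every function of $(t,\bx)$ by zero for $t<0$ (as in the standing convention), define the scattering operator
\begin{equation*}
(\cK g)(t,\bx) := \frac{1}{\nu_{d-1}}\int_{\Omega}\frac{E(\bx,\by)}{|\bx-\by|^{d-1}}\,\sigma_s(\by)\,g\big(t-|\bx-\by|,\by\big)\,d\by ,
\end{equation*}
and let $h(t,\bx)$ be given by the same expression with $\sigma_s(\by)\,g(t-|\bx-\by|,\by)$ replaced by $f(t-|\bx-\by|,\by)$. Then~\eqref{EQ: INT EQ} is precisely $\aver{u}=\cK\aver{u}+h$. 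Since the kernel links $(t,\bx)$ only to the earlier times $t-|\bx-\by|\le t$, this is a Volterra-type equation in time; that $\cK g$ and $h$ are measurable and bounded whenever the data are follows from Fubini's theorem and is routine, so $\cK:L^{\infty}([0,T]\times\Omega)\to L^{\infty}([0,T]\times\Omega)$ is well defined.

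The main step would be to prove $\|\cK\|_{L^{\infty}\to L^{\infty}}\le k_0<1$. Passing to polar coordinates about $\bx$ via $\by=\bx-r\bv$, $d\by=\nu_{d-1}r^{d-1}\,dr\,d\bv$, the Jacobian factor $r^{d-1}$ cancels the weak singularity $|\bx-\by|^{-(d-1)}=r^{-(d-1)}$ and the prefactor $\nu_{d-1}^{-1}$ cancels as well, leaving
\begin{equation*}
(\cK g)(t,\bx) = \int_{\bbS^{d-1}}\!\!\int_0^{\tau_{-}(\bx,\bv)} e^{-\int_0^r\sigma_t(\bx-s\bv)\,ds}\,\sigma_s(\bx-r\bv)\,g(t-r,\bx-r\bv)\,dr\,d\bv .
\end{equation*}
I would then use the pointwise bound $\sigma_s\le k_0\sigma_t$ from assumption $\cA$, followed by the telescoping identity $\int_0^{\tau_{-}}e^{-\int_0^r\sigma_t(\bx-s\bv)\,ds}\sigma_t(\bx-r\bv)\,dr=1-e^{-\int_0^{\tau_{-}}\sigma_t(\bx-s\bv)\,ds}\le 1$ and the normalization $\int_{\bbS^{d-1}}d\bv=1$, to conclude $|(\cK g)(t,\bx)|\le k_0\|g\|_{L^{\infty}}$, uniformly in $(t,\bx)$ and with $k_0$ independent of $T$ and $\Omega$. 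The same computation applied to $h$, using only $e^{-\int_0^r\sigma_t}\le 1$ and $\tau_{-}(\bx,\bv)\le\operatorname{diam}(\Omega)$ (or the sharper $\sigma_t\ge\underline{\sfc}$, which gives $e^{-\int_0^r\sigma_t}\le e^{-\underline{\sfc}r}$ and a domain-independent constant), yields $\|h\|_{L^{\infty}}\le C_1\|f\|_{L^{\infty}}$.

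With $k_0<1$, the Banach fixed-point theorem---equivalently, summation of the Neumann series $\aver{u}=\sum_{n\ge0}\cK^{n}h$---then gives a unique $\aver{u}\in L^{\infty}([0,T]\times\Omega)$ with $\|\aver{u}\|_{L^{\infty}}\le(1-k_0)^{-1}\|h\|_{L^{\infty}}\le(1-k_0)^{-1}C_1\|f\|_{L^{\infty}}$, which is the claim with $C_0=C_1/(1-k_0)$. I do not expect a genuine obstacle here: the whole argument rests on the contraction estimate, and there the geometric cancellation in polar coordinates together with the exponential-attenuation telescoping reduce everything to the single hypothesis $\sigma_s/\sigma_t\le k_0<1$. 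The points that need care are the bookkeeping of the normalization conventions ($d\bv$ normalized versus $\nu_{d-1}$ the full sphere area) and, if one later wants $\aver{u}\in C([0,T]\times\Omega)$ rather than merely $L^{\infty}$, a separate and routine equicontinuity argument using assumption $\cB$. Should the global contraction feel delicate, one could instead run a Picard iteration on short time slabs exploiting the Volterra structure, but the uniform contraction above is cleaner and directly yields a $T$-independent constant.
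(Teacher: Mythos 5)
Your proposal is correct and follows essentially the same route as the paper: recast \eqref{EQ: INT EQ} as $\aver{u}=\cK\aver{u}+\cK(f/\sigma_s)$ with the same scattering operator $\cK$, show $\|\cK\|_{op}\le k_0<1$, and conclude by the Banach fixed-point theorem (Neumann series), with the constant $C_0$ of the same form $\cO\bigl(1/(\underline{\sfc}(1-k_0))\bigr)$. The only difference is that you spell out the operator-norm bound via polar coordinates and the telescoping identity $\int_0^{\tau_-}e^{-\int_0^r\sigma_t}\sigma_t\,dr\le 1$, a computation the paper asserts without detail; this is a welcome addition but not a different argument.
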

\begin{proof}
    Define the operator $\cK: L^{\infty}([0,T]\times\Omega)\mapsto L^{\infty}([0,T]\times\Omega)$
    \begin{equation}
    \cK U(t,\bx) = \frac{1}{\nu_{d-1}}\int_{\Omega} \frac{E(\bx, \by)}{|\bx - \by|^{d-1}}\sigma_s(\by) U(t-|\bx - \by|, \by) d\by\,,
    \end{equation}
    then $\|\cK\|_{op} \le k_0 < 1$, where $\|\cdot\|_{op}$ means the operator norm, which implies $\cK$ is a contraction. The solution $\aver{u}$ can be solved through
    \begin{equation}
    \aver{u} = (\cI - \cK)^{-1}\cK \left(\frac{f}{\sigma_s}\right)\,.
    \end{equation}
    Hence the conclusion follows by the Banach fixed point theorem that
    \begin{equation}
    \|\aver{u}\|_{L^{\infty}([0,T]\times\Omega)} \le \frac{k_0}{\underline{\sfc}(1 - k_0)}\|f\|_{L^{\infty}([0,T]\times\Omega)}\,.
    \end{equation}
\end{proof}
\begin{lemma}\label{LEM:TIME}
    Suppose the assumptions $\cA$ and $\cB$ are satisfied, then
    \begin{equation}
    |\aver{u}(t, \bx) - \aver{u}(s, \bx)|\le C_0 C_f \omega(|s-t|)\,.
    \end{equation}
\end{lemma}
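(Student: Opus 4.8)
The plan is to fix the shift $h:=|s-t|$, assume without loss of generality $s=t+h$ (the case $h=0$ being trivial and the roles of $s,t$ symmetric), and study $w_h(t,\bx):=\aver{u}(t+h,\bx)-\aver{u}(t,\bx)$ as an element of $L^{\infty}$ solving a fixed-point equation with the same contraction $\cK$ that appears in Lemma~\ref{LEM: WELL}. First I would extend $\aver{u}$ and $f$ by zero to negative times; since under this convention the right-hand side of~\eqref{EQ: INT EQ} vanishes whenever $t\le 0$, the integral equation~\eqref{EQ: INT EQ} in fact holds for every $t\le T$. Subtracting it at time $t$ from the same equation at time $t+h$ gives, for all $t\le T-h$,
\begin{equation}\nonumber
w_h(t,\bx)=\cK w_h(t,\bx)+\frac{1}{\nu_{d-1}}\int_{\Omega}\frac{E(\bx,\by)}{|\bx-\by|^{d-1}}\bigl(f(t+h-|\bx-\by|,\by)-f(t-|\bx-\by|,\by)\bigr)\,d\by,
\end{equation}
where $\cK$ is exactly the operator of Lemma~\ref{LEM: WELL}; note that $w_h(t-|\bx-\by|,\by)$ is precisely the difference of $\aver{u}$ at the two shifted times, which is why the scattering part reassembles into $\cK w_h$.

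Next I would bound the inhomogeneous term. Using assumption $\cB$ together with $f(0,\cdot)\equiv0$ and the monotonicity of $\omega$ on $[0,\infty)$, one checks $|f(t+h-|\bx-\by|,\by)-f(t-|\bx-\by|,\by)|\le C_f\,\omega(h)$ pointwise in $\by$: when both shifted times are nonnegative this is immediate from $\cB$ (with $\omega(0)=0$ killing the spatial term, since the two spatial points coincide), when both are negative the difference is $0$, and when $t-|\bx-\by|<0\le t+h-|\bx-\by|$ one rewrites the single surviving value as $f(t+h-|\bx-\by|,\by)-f(0,\by)$ and uses $\omega(t+h-|\bx-\by|)\le\omega(h)$. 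Combining this with the elementary bound $\nu_{d-1}^{-1}\int_{\Omega}E(\bx,\by)|\bx-\by|^{-(d-1)}\,d\by\le\underline{\sfc}^{-1}\|\cK 1\|_{L^{\infty}}\le k_0/\underline{\sfc}$ (from $\sigma_s\ge\underline{\sfc}$ and $\|\cK\|_{op}\le k_0$ applied to the constant function $1$), the inhomogeneous term is bounded in $L^{\infty}$ by $\tfrac{k_0}{\underline{\sfc}}C_f\,\omega(h)$.

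Finally, since $\aver{u}\in L^{\infty}([0,T]\times\Omega)$ by Lemma~\ref{LEM: WELL}, the function $w_h$ belongs to $L^{\infty}$, so the displayed identity and $\|\cK\|_{op}\le k_0<1$ give $\|w_h\|_{L^{\infty}}\le k_0\|w_h\|_{L^{\infty}}+\tfrac{k_0}{\underline{\sfc}}C_f\,\omega(h)$, hence $\|w_h\|_{L^{\infty}}\le\tfrac{k_0}{\underline{\sfc}(1-k_0)}C_f\,\omega(h)=C_0C_f\,\omega(h)$ with the same $C_0$ as in Lemma~\ref{LEM: WELL}; evaluating at $\bx$ and recalling $h=|s-t|$ finishes the proof. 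The only real subtlety I anticipate is the bookkeeping near $t=0$ — verifying that the zero-extension is consistent with~\eqref{EQ: INT EQ} and that the $f$-difference is still controlled by $\omega(h)$ when one time argument is negative; everything else is a direct reuse of the contraction machinery already established in Lemma~\ref{LEM: WELL}.
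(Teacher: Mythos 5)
Your proposal is correct and follows essentially the same route as the paper: you form the time-shifted difference $w$, observe it satisfies the same integral equation with the source replaced by a difference of $f$ at shifted times, bound that difference by $C_f\,\omega(|s-t|)$ using the zero extension, $f(0,\cdot)\equiv 0$, and monotonicity of $\omega$, and then invoke the contraction estimate of Lemma~\ref{LEM: WELL} to conclude with the same constant $C_0=\tfrac{k_0}{\underline{\sfc}(1-k_0)}$. The only difference is that you spell out the case analysis near $t=0$ and the $k_0/\underline{\sfc}$ bound explicitly rather than citing Lemma~\ref{LEM: WELL} wholesale, which is a matter of detail, not of substance.
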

\begin{proof}
     Define 
    \begin{equation}
    w(\tau,\bx) = \aver{u}(\tau,\bx) - \aver{u}(\tau - (t-s), \bx)\,,
    \end{equation}
    then $w(t, \bx)=\aver{u}(t, \bx) - \aver{u}(s, \bx)$ and $w$ satisfies
    \begin{equation}
    \begin{aligned}
    w(\tau,\bx) &=\frac{1}{\nu_{d-1}}\int_{\Omega} \frac{E(\bx, \by)\sigma_s(\by)}{|\bx - \by|^{d-1}}w(\tau-|\bx - \by|, \by) d\by \\
    &+ \frac{1}{\nu_{d-1}}\int_{\Omega} \frac{E(\bx, \by)}{|\bx - \by|^{d-1}} \left[f(\tau-|\bx - \by|, \by) - f(\tau - (t-s)-|\bx-\by|,\by)\right] d\by\,.
    \end{aligned}
    \end{equation}
Since $f(0,\cdot) = 0$, then with the convention that $f\equiv 0$ for $t < 0$ and monotonicity of $\omega(s)= s(1+|\log s|)$, we obtain 
\begin{equation}
|f(\tau-|\bx - \by|, \by) - f(\tau - (t-s)-|\bx-\by|,\by) |\le C_f\omega( |s-t|)\,,
\end{equation}
By Lemma~\ref{LEM: WELL}, we conclude that $\|w\|_{L^{\infty}([0,T]\times\Omega)} \le C_0 C_f \omega(|s-t|)$.
\end{proof}
\begin{lemma}\label{LEM: SPACE}
    Suppose the assumptions $\cA$ and $\cB$ are satisfied, then there exists a constant $C$ such that
    \begin{equation}
    |\aver{u}(t, \bx) - \aver{u}(t, \bz)|\le C \omega(|\bx - \bz|)\,.
    \end{equation}
\end{lemma}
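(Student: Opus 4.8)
The plan is to compare the integral equation~\eqref{EQ: INT EQ} at the two base points $\bx$ and $\bz$ over the \emph{same} domain $\Omega$, so that no mismatch of integration regions --- and hence no delicate dependence of the chord length $\tau_{-}$ on the base point --- ever enters; the estimate then reduces to Lemmas~\ref{LEM: WELL} and~\ref{LEM:TIME} together with a single potential-type bound on the kernel. Write $R(\tau,\by):=\sigma_s(\by)\aver{u}(\tau,\by)+f(\tau,\by)$ and $\rho:=|\bx-\bz|$. Subtracting the two instances of~\eqref{EQ: INT EQ} and splitting the integrand gives
\begin{align*}
\aver{u}(t,\bx)-\aver{u}(t,\bz)&=\frac{1}{\nu_{d-1}}\int_{\Omega}\frac{E(\bx,\by)}{|\bx-\by|^{d-1}}\bigl[R(t-|\bx-\by|,\by)-R(t-|\bz-\by|,\by)\bigr]\,d\by\\
&\quad+\frac{1}{\nu_{d-1}}\int_{\Omega}\left[\frac{E(\bx,\by)}{|\bx-\by|^{d-1}}-\frac{E(\bz,\by)}{|\bz-\by|^{d-1}}\right]R(t-|\bz-\by|,\by)\,d\by.
\end{align*}
The key observation is that in the first integral the two arguments of $\aver{u}$ differ only in the time variable, by at most $\bigl||\bx-\by|-|\bz-\by|\bigr|\le\rho$; consequently the quantity being estimated does not reappear on the right-hand side, and no Gr\"onwall or fixed-point argument is needed here.

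For the first integral, Lemma~\ref{LEM:TIME}, assumption~$\cB$, and the monotonicity of $\omega$ give the pointwise bound $|R(t-|\bx-\by|,\by)-R(t-|\bz-\by|,\by)|\le(\overline{\sfc}\,C_0C_f+C_f)\,\omega(\rho)$; since $E\le1$ and $\frac{1}{\nu_{d-1}}\int_{\Omega}|\bx-\by|^{-(d-1)}\,d\by\le\mathrm{diam}(\Omega)$, this term is at most a constant multiple of $\omega(\rho)$. For the second integral, Lemma~\ref{LEM: WELL} bounds $|R(t-|\bz-\by|,\by)|\le(\overline{\sfc}\,C_0+1)\|f\|_{L^{\infty}}$, so it remains to control $\int_{\Omega}\bigl|\tfrac{E(\bx,\by)}{|\bx-\by|^{d-1}}-\tfrac{E(\bz,\by)}{|\bz-\by|^{d-1}}\bigr|\,d\by$. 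I would split the kernel difference as $[E(\bx,\by)-E(\bz,\by)]\,|\bz-\by|^{-(d-1)}+E(\bx,\by)\bigl[|\bx-\by|^{-(d-1)}-|\bz-\by|^{-(d-1)}\bigr]$; the first summand is handled by noting that the optical distance $\ell(\bx,\by):=\int_0^{|\bx-\by|}\sigma_t\bigl(\bx-s\tfrac{\bx-\by}{|\bx-\by|}\bigr)\,ds=|\bx-\by|\int_0^1\sigma_t(\by+\tau(\bx-\by))\,d\tau$ is Lipschitz in $\bx$ uniformly in $\by\in\overline{\Omega}$ (using $\sigma_t\in C^{1,\alpha}\subset C^{0,1}$ and boundedness of $\Omega$), so that $|E(\bx,\by)-E(\bz,\by)|=|e^{-\ell(\bx,\by)}-e^{-\ell(\bz,\by)}|\le|\ell(\bx,\by)-\ell(\bz,\by)|\le C\rho$, and since $\int_{\Omega}|\bz-\by|^{-(d-1)}\,d\by<\infty$ this summand contributes $O(\rho)=O(\omega(\rho))$.

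The one genuinely technical point --- and the source of the logarithmic factor in $\omega(s)=s(1+|\log s|)$ --- is the second summand; bounding $E(\bx,\by)\le1$, it suffices to show $\int_{\Omega}\bigl||\bx-\by|^{-(d-1)}-|\bz-\by|^{-(d-1)}\bigr|\,d\by\le C\,\omega(\rho)$. I would establish this by the standard near/far decomposition: on $B(\bx,2\rho)$ (which also contains $\bz$) estimate the two singular terms separately, each integral being $O(\rho)$; on $\Omega\setminus B(\bx,2\rho)$ use the mean value inequality $\bigl||\bx-\by|^{-(d-1)}-|\bz-\by|^{-(d-1)}\bigr|\le C\rho\,|\bx-\by|^{-d}$, valid since $|\bw-\by|\ge\tfrac12|\bx-\by|$ for every $\bw$ on the segment $[\bx,\bz]$ when $|\bx-\by|\ge2\rho$, and then integrate in polar coordinates to obtain $C\rho\int_{2\rho}^{\mathrm{diam}(\Omega)}r^{-1}\,dr=C\rho\,\log\tfrac{\mathrm{diam}(\Omega)}{2\rho}\le C\,\omega(\rho)$ once $\rho$ is small (for $\rho$ not small the bound is trivial from $\|\aver{u}\|_{L^{\infty}}\le C_0\|f\|_{L^{\infty}}$). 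Collecting the three contributions yields $|\aver{u}(t,\bx)-\aver{u}(t,\bz)|\le C\,\omega(|\bx-\bz|)$ with $C$ depending only on $d$, $\mathrm{diam}(\Omega)$, $\|\sigma_t\|_{C^{0,1}}$, $\overline{\sfc}$, $C_0$, and $C_f$. The main obstacle is precisely this last potential-theoretic estimate, though it is classical.
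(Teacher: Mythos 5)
Your proof is correct, and its skeleton is the same as the paper's: you subtract the two instances of~\eqref{EQ: INT EQ} at the same base time and split into (i) a term where only the \emph{time} argument of $R=\sigma_s\aver{u}+f$ is shifted by at most $|\bx-\bz|$ (handled by Lemma~\ref{LEM:TIME}, assumption~$\cB$, monotonicity of $\omega$, and integrability of the kernel), and (ii) a term carrying the difference of the kernels $\tfrac{E(\bx,\by)}{|\bx-\by|^{d-1}}-\tfrac{E(\bz,\by)}{|\bz-\by|^{d-1}}$ against a bounded density (the paper's $\cI_1$, with its $\cI_2+\cI_3$ matching your first integral); like the paper, you correctly observe that no fixed-point or Gr\"onwall step is needed because the unknown reappears only with a shifted time argument. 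The one place you diverge is the kernel-difference term: the paper simply cites Lemma~2.3 of Vainikko~\cite{vainikko2006multidimensional}, whereas you prove it directly --- Lipschitz continuity of the optical depth (using $\sigma_t\in C^{1,\alpha}\subset C^{0,1}$ and convexity of $\Omega$ so the segments stay in $\overline\Omega$) gives $|E(\bx,\by)-E(\bz,\by)|\le C|\bx-\bz|$, and the classical near/far decomposition around $B(\bx,2|\bx-\bz|)$ with the mean value inequality yields the $|\bx-\bz|\log\frac{1}{|\bx-\bz|}$ bound, i.e.\ exactly the source of the $\omega$ modulus. Your route is self-contained and makes the origin of the logarithm explicit (at the cost of a page of classical potential-theory bookkeeping), while the paper's citation is shorter and also covers the general weakly singular kernel setting used elsewhere in~\cite{vainikko2006multidimensional}; your constants are slightly cruder (e.g.\ bounding the kernel integral by $\overline{\sfc}\,\mathrm{diam}(\Omega)$ rather than $k_0$), but that is immaterial for the statement.
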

\begin{proof}
Denote $R = \sigma_s\aver{u} + f$, then we  can decompose
    \begin{equation}
    \begin{aligned}
    \aver{u}(t,\bx) - \aver{u}(t,\bz) = \cI_1 + \cI_2 +\cI_3\,,
    \end{aligned}
    \end{equation}
    where $\cI_i$, $i=1,2,3$ are:
    \begin{equation}
    \begin{aligned}
    \cI_1 &= \frac{1}{\nu_{d-1}}  \int_{\Omega} \left[ \frac{E(\bx, \by)}{|\bx- \by|^{d-1}} - \frac{E(\bz,\by)}{|\bz - \by|^{d-1}} \right]R(t-|\bx-\by|,\by) d\by\,, \\
    \cI_2 &= \frac{1}{\nu_{d-1}}  \int_{\Omega} \frac{E(\bz, \by)\sigma_s(\by)}{|\bz- \by|^{d-1}}\left[\aver{u}(t-|\bx-\by|,\by) - \aver{u}(t-|\bz-\by|,\by) \right]d\by\,,\\
    \cI_3 &= \frac{1}{\nu_{d-1}}  \int_{\Omega} \frac{E(\bz, \by)}{|\bz- \by|^{d-1}}\left[f(t-|\bx-\by|,\by) - f(t-|\bz-\by|,\by) \right]d\by\,.
    \end{aligned}
    \end{equation}
    By Lemma~\ref{LEM:TIME} and the assumption $\cB$, $|\cI_2|\le C_0 C_f\omega(|\bx - \bz|)$ and $|\cI_3|\le C \omega(|\bx - \bz|)$. For $\cI_1$, by the Lemma 2.3 from~\cite{vainikko2006multidimensional} that $
    |\cI_1|\le C\omega(|\bx - \bz|)$, therefore $|\aver{u}(t,\bx) - \aver{u}(t,\bz)| \le C \omega(|\bx -\bz|)$.
\end{proof}

\section{Numerical algorithm}
\label{sec:algorithm}
In the next, we will develop an efficient numerical algorithm to solve the integral equation~\eqref{EQ: INT EQ}. When $\aver{u}(t, \bx)$ is known, the solution $u(t, \bx, \bv)$ can be easily computed by~\eqref{EQ: LINEAR TRANSPORT SOLUTION} using a fast sweeping method~\cite{gao2009fast}. The main advantage of this algorithm is that it does not require an explicit discretization for the angular variable $\bv$. It is clear that the computational cost for~\eqref{EQ: INT EQ} will be only depending on the time and spatial variables. In many practical applications such as radiation hydrodynamics~\cite{densmore2012hybrid}, astrophysical plasmas, the main quantities of interests are not the local solutions $u(t,\bx, \bv)$. In these cases, we do not even need to perform the computation for~\eqref{EQ: LINEAR TRANSPORT SOLUTION} and the computational complexity will be completely independent of the angular variables. 

\subsection{Discretization of time}
\label{SEC:TIME}
\begin{definition}
    Let $S_i = [t_{i}, t_{i+1}]$, $i=0,\dots, N$ as an equispaced subdivision of $[0,T]$ that
    \begin{equation}
    0 = t_{0} < \dots < t_{N} =T ,\quad  t_{i+1} - t_{i}\equiv h\,,
    \end{equation}
    then $t_i = i h$. Denote the piecewise linear continuous function on $[0,T]$ as 
    \begin{equation}
    V_{h} = \{\psi\in C([0,T]): \psi|_{S_i}\in P_1(S_i),\;\forall 1\le i\le N \}\,,
    \end{equation}
    where $P_1(S_i)$ is the set of linear polynomials on $S_i$.The space $V_h$ can be spanned by the nodal basis $\{\phi_l\}_{l=0}^{N+1}\subset V_h$ where $\phi_l(t_j) = \delta_{lj}$, where $\delta_{lj}$ is the Kronecker delta.
\end{definition}
We seek for the time domain piecewise linear solution $w_{h}(t,\bx)$ to the equation~\eqref{EQ: INT EQ}. Let $\aver{u}_h$ and $f_h$ be the corresponding approximations for $\aver{u}$ and $f$ in the form as
\begin{equation}\label{EQ:APPROX TIME}
\aver{u}_h(t,\bx) = \sum_{l= 0}^{N} w_{l}(\bx) \phi_{l}(t),\quad f_h(t,\bx) = \sum_{l= 0}^{N} c_{l}(\bx) \phi_{l}(t)\,, 
\end{equation}
where $w_{l}(\bx)=\aver{u}_h(t_l,\bx), c_l(\bx) = f(t_l, \bx)$ by the definition of $\phi_l$. At the time step $t=t_l$, $w_l(\bx)$ satisfies the following integral equation instead,
\begin{equation}\label{EQ: WL0}
w_l(\bx) = \frac{1}{\nu_{d-1}}\sum_{k=0}^N\int_{\Omega} \frac{E(\bx, \by)}{|\bx - \by|^{d-1}}  (\sigma_s(\by) w_k(\by)\phi_k(t_l-{|\bx - \by|}) + c_k(\by)\phi_k(t_l-{|\bx-\by|})) d\by\,.
\end{equation}
Define the standard hat function $V(t)$ supported on $[-1,1]$ that
\begin{equation}
V(t) = 
\begin{cases} 
1 - t,\quad &\text{if } t\in [0,1]\,,\\
1 + t,\quad &\text{if } t\in [-1,0)\,,\\
0,\quad&\text{otherwise}\,,
\end{cases}
\end{equation} 
then $\phi_k(t_l - |\bx - \by|)$ can be represented by
\begin{equation}
\phi_k(t_l - |\bx - \by|) = V\left(\frac{|\bx-\by|}{h} + k-l \right)\,.
\end{equation}
The integral equation~\eqref{EQ: WL0} is then rewritten as
\begin{equation}\label{EQ: WL}
\begin{aligned}
w_l(\bx) &= \frac{1}{\nu_{d-1}}\int_{\Omega} \frac{E(\bx, \by)}{|\bx - \by|^{d-1}}\sum_{k=0}^N \sigma_s(\by) w_k(\by)  V\left(\frac{|\bx-\by|}{h} + k-l\right) d\by \\
&\quad + \frac{1}{\nu_{d-1}}\int_{\Omega} \frac{E(\bx, \by)}{|\bx - \by|^{d-1}}\sum_{k=0}^N c_k(\by)  V\left(\frac{|\bx-\by|}{h} +k-l\right) d\by\,.
\end{aligned}
\end{equation}
It is worth while to notice that the above formulation~\eqref{EQ: WL} implies causality, this is because 
\begin{equation}
 V\left(\frac{|\bx-\by|}{h} +k-l\right) \neq 0 \iff -1 < \frac{|\bx-\by|}{h} +k-l < 1\,,
\end{equation}
hence $l > k + \frac{|\bx - \by|}{h} - 1$, since $l,k\in\bbN$, we must have $l\ge k$. In fact, there are at most two choices for $k$ to take nonzero values, the summation in~\eqref{EQ: WL} over $k$ can be reduced to $k\le l$ instead. The error from discretization in time is estimated in the following Lemma~\ref{LEM: ERROR TIME DISCRET}.
\begin{lemma}\label{LEM: ERROR TIME DISCRET}
    Suppose the assumptions $\cA$ and $\cB$ are satisfied, then 
    \begin{equation}
    \|\aver{u}_h(t,\bx) - \aver{u}(t,\bx)\|_{L^{\infty}([0,T]\times\Omega)} \le \cO(\omega(h))\,.
    \end{equation}
\end{lemma}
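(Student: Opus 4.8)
The plan is to show that the time-discretized solution $\aver{u}_h$ converges to $\aver{u}$ with an error of order $\omega(h)$ by a fixed-point/stability argument, splitting the error into an interpolation (consistency) part and a propagated part controlled by the contraction property of $\cK$ established in Lemma~\ref{LEM: WELL}. First I would introduce the piecewise-linear time interpolant $\Pi_h \aver{u}$ of the exact solution, i.e. the function in $V_h$ agreeing with $\aver{u}$ at the nodes $t_l$, and write
\begin{equation}\nonumber
\aver{u}_h - \aver{u} = \big(\aver{u}_h - \Pi_h\aver{u}\big) + \big(\Pi_h \aver{u} - \aver{u}\big).
\end{equation}
The second term is pure interpolation error: since piecewise-linear interpolation of a function with modulus of continuity $\omega$ incurs an error $\cO(\omega(h))$ on each subinterval, Lemma~\ref{LEM:TIME} gives $\|\Pi_h\aver{u} - \aver{u}\|_{L^\infty} \le C\,\omega(h)$ directly. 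So the work is in the first term.

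For the first term, I would write down the equation satisfied by $e_h := \aver{u}_h - \Pi_h\aver{u}$. The nodal values $w_l(\bx) = \aver{u}_h(t_l,\bx)$ satisfy~\eqref{EQ: WL0}, while the nodal values of $\Pi_h\aver{u}$, namely $\aver{u}(t_l,\bx)$, satisfy the exact equation~\eqref{EQ: INT EQ} at $t=t_l$. Subtracting, the key point is that~\eqref{EQ: WL0} is exactly the statement that $\aver{u}_h$ solves a discretized version of~\eqref{EQ: INT EQ} in which, inside the integral, $\aver{u}_h(t_l - |\bx-\by|,\by)$ and $f(t_l-|\bx-\by|,\by)$ are replaced by their piecewise-linear-in-time interpolants evaluated at the retarded time. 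Thus $e_h$ satisfies a fixed-point equation of the form
\begin{equation}\nonumber
e_h(t_l,\bx) = (\cK_h e_h)(t_l,\bx) + g_l(\bx),
\end{equation}
where $\cK_h$ is the discrete analogue of $\cK$ (replacing the retarded argument by its time-interpolant) and the source $g_l$ collects the consistency errors: the difference between $\aver{u}(t_l-|\bx-\by|,\by)$ and its piecewise-linear time-interpolant, and likewise for $f$. By Lemma~\ref{LEM:TIME} (for $\aver{u}$) and assumption $\cB$ (for $f$), each of these retarded-interpolation errors is $\cO(\omega(h))$ uniformly, so $\|g_l\|_{L^\infty} \le C\,\omega(h)$. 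It remains to check that $\cK_h$ is still a contraction on $L^\infty$ with norm $\le k_0 < 1$: this follows because the hat-function weights $V(\cdot/h + k - l)$ summed over $k$ equal $1$ (partition of unity) and are nonnegative, so the operator $\cK_h$ has exactly the same $L^\infty$ bound as $\cK$, namely $k_0$. Hence $\|e_h\|_{L^\infty} \le (1-k_0)^{-1}\|g\|_{L^\infty} \le C\,\omega(h)$, and combining with the interpolation estimate finishes the proof.

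The main obstacle I anticipate is the bookkeeping around the retarded argument: one must be careful that the discrete operator $\cK_h$ genuinely acts on the unknown at nodal times only (the causality/partition-of-unity discussion after~\eqref{EQ: WL} is what makes this well-posed, reducing the sum over $k$ to $k \le l$ with at most two nonzero weights), and that the consistency source $g_l$ really is controlled by $\omega(h)$ rather than something weaker — this is where the logarithmic modulus $\omega(s) = s(1+|\log s|)$ and its monotonicity matter, since near $|\bx-\by|\approx t_l$ the retarded time approaches $0$ and one needs $\omega$ to absorb the mild singularity, exactly as in Lemma~\ref{LEM:TIME}. A secondary technical point is confirming the nonnegativity and unit-sum of the hat weights uniformly in $|\bx-\by|/h$, which guarantees $\|\cK_h\|_{op}\le k_0$ with no $h$-dependent deterioration; once that is in hand the Neumann-series/Banach argument is routine.
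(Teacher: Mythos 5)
Your proposal is correct and follows essentially the same route as the paper: an error (fixed-point) equation for the time-discretization error, closed by the contraction/stability of the integral operator from Lemma~\ref{LEM: WELL} together with the consistency bound $\|f_h-f\|_{L^\infty}=\cO(\omega(h))$ coming from assumption $\cB$ (with the zero extension for negative retarded times). The only difference is one of detail: the paper writes the error equation directly for $\aver{u}_h-\aver{u}$ with source $f_h-f$ and immediately invokes Lemma~\ref{LEM: WELL}, whereas you insert the interpolant $\Pi_h\aver{u}$ and use Lemma~\ref{LEM:TIME} to control the non-nodal times and the discrete operator's norm via the hat-function partition of unity --- a more careful bookkeeping of the same argument.
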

\begin{proof}
    With the condition provided for $f$, we can extend $f$ to $(-\infty, T]\times\Omega$ by zero extension without changing the continuity class,  then $\|f_h-f\|_{L^{\infty}((-\infty,T]\times\Omega)} = \cO(\omega(h))$. Take $e_h(t,\bx) = \aver{u}_h(t,\bx) - \aver{u}(t,\bx)$, then
    \begin{equation}
    \begin{aligned}
    e_h(t,\bx) &=  \aver{u}_h(t,\bx) - \aver{u}(t, \bx) \\
    &= \frac{1}{\nu_{d-1}}\int_{\Omega}\frac{E(\bx, \by)}{|\bx - \by|^{d-1}}\sigma_s(\by)  e_h(t_l-{|\bx-\by|},\by )d\by \\
    &\quad + \frac{1}{\nu_{d-1}}\int_{\Omega}\frac{E(\bx, \by)}{|\bx - \by|^{d-1}}\left(  f_h(t_l-{|\bx-\by|},\by) - f(t_l-{|\bx-\by|},\by)\right) d\by\,.
    \end{aligned}
    \end{equation}
    By Lemma~\ref{LEM: WELL}, we obtain that
    \begin{equation}
    \|e_h\|_{L^{\infty}([0,T]\times\Omega)} \le C\|f_h-f\|_{L^{\infty}((-\infty,T]\times\Omega)} = \cO( \omega(h)\,.
    \end{equation}
\end{proof}

\subsection{Discretization of space}
\label{SEC:SPACE}
Clearly, in order to solve the equation~\eqref{EQ: WL}, one has to evaluate the volume integrals on the right-hand-side. We follow the piecewise constant collocation method (PCCM) introduced in~\cite{ren2019fast,vainikko2006multidimensional} for the spatial discretization. The discretization is constructed as follows:
\begin{enumerate}
    \item Partition of $\Omega$. For a small $\ell > 0$, we partition the spatial domain $\Omega$ into two parts: boundary part $\Omega_b^{\ell}$ and interior part $\Omega_{i}^{\ell}$, where
    \begin{equation}
    \Omega_{b}^{\ell} := \{\bx\in\Omega : \text{dist}(\partial\Omega, \bx) \le \ell^2 \}\;\text{ and }\; \Omega_i^{\ell} = \Omega \backslash \Omega_b^{\ell}\,,
    \end{equation}
    Let $\{T_{p,\ell}\}_{p=1}^M$ of $\Omega$ be a spatial discretization, that is $T_{p,\ell}\cap T_{p',\ell}=\emptyset$, $\forall p\neq p'$ and $\Omega = \bigcup_{p=1}^{M} T_{p,\ell}$,  which also satisfies that: (a) $\text{diam}(T_{p,\ell}) \le \ell$, $\forall p$; and (b) $T_{p,\ell}\cap \Omega_i^{\ell}\neq \emptyset$, $\forall p$. (It means no cell $T_{p,\ell}$ is completely in $\Omega_b^{\ell}$, which can be easily satisfied since the thickness of $\Omega_b^{\ell}$ is of order ${\ell}^2$.) It is then clear that $M \simeq \cO({\ell}^{-d})$. For any $1\le p\le M$, if $T_{p,\ell}\cap \Omega_b^{\ell}\neq \emptyset$, we set $T'_{p,\ell} := T_{p,\ell} \cap \Omega_i^{\ell}$ when it is not empty.
    \item Collocation Points. For each cell $T_{p,\ell}$ in the discretization, we locate the collocation point $\bx_p \in T_{p,\ell}$ as follows:
    \begin{enumerate}
        \item If $T_{p,\ell}\subset\Omega_i^{\ell}$,  $\bx_p$ is chosen as the mass centroid point
        \begin{equation}
        \bx_p = \frac{1}{|T_{p,\ell}|} \int_{T_{p,\ell}} \bz d\bz.
        \end{equation}
        \item If $T_{p,\ell}\cap \Omega_{b}^{\ell}\neq \emptyset$, choose an arbitrary $\bx_p\in T'_{p,\ell}$ .
    \end{enumerate}
\end{enumerate}
The simplest example of the above discretization is to use a uniform grid $\cG$ with cell size of $\ell$. For a cell $T_{p,\ell}\subset \cG$ contained in $\Omega$, we choose its centroid point as the collocation point. For a boundary-incident cell $T_{p,\ell}\subset \cG$ such that $T_{p,\ell}\cap \partial\Omega\neq \emptyset$, we replace the cell $T_{p,\ell}$ by the intersection $T_{p,\ell}' = T_{p,\ell}\cap\Omega$ and choose an arbitrary point in $T'_{p,\ell}$ as the collocation point. When the boundary $\partial\Omega$ is $C^2$, the boundary part $\partial\Omega\cap T_{p,\ell}$ can be approximated using a tangent plane or secant plane. The difference of measure in this case is of order $\cO({\ell}^2)$.

\subsection{Linear system from discretization}
\label{SEC:LINEAR SYS}
With the above discretization scheme in space, for each  $l$ that $0\le l\le N$, we approximate $w_{l}(\bx)$ and $c_{l}(\bx)$ with spatially piecewise constant functions $\bar{w}_{l}(\bx)$ and $\bar{c}_{l}(\bx)$,  respectively, 
\begin{equation}\label{EQ:PIECEWISE CONSTANT}
\bar{w}_{l}(\bx) = \sum_{p=1}^{M} {w}_{l}^p \chi_{p}(\bx), \quad  \bar{c}_{l}(\bx) = \sum_{p=1}^{M} {c}_{l}^p \chi_{p}(\bx),\quad \chi_{p}(\bx) = \begin{cases}
1,\quad \bx \in T_{p,\ell} \\
0,\quad \bx \notin T_{p,\ell}
\end{cases}
\end{equation}
Replacing $w_{l}$ by $\bar{w}_{l}$ in the integral equation~\eqref{EQ: WL} and using causality, we obtain the discretized linear equation for $\bar{w}_{l}(\bx_p) = {w}_{l}^p$:
\begin{equation}\label{EQ: DISCRETIZED}
\begin{aligned}
& {w}_{l}^p =\frac{1}{\nu_{d-1}}\sum_{q=1}^M   \sum_{k= 0}^{l}\left( \int_{T_{q,{\ell}}}\frac{E(\bx_p, \by)\sigma_s(\by)  }{|\bx_p - \by|^{d-1}} V_{}\left(\frac{|\bx_p -\by| }{h}+(k-l)\right)  d\by\right){w}_{k}^q \\
&\quad+ \frac{1}{\nu_{d-1}}\sum_{q=1}^M  \sum_{k= 0}^{l}  \left( \int_{T_{q,{\ell}}}\frac{E(\bx_p, \by)  }{|\bx_p - \by|^{d-1}} V_{}\left(\frac{|\bx_p - \by|}{h}+(k-l)\right)  d\by\right) {c}_{k}^q\,.
\end{aligned}
\end{equation}
Similar to~\cite{ren2019fast}, we take the following approximations for the local integrals on $T_{q,\ell}$,
\begin{equation}\nonumber
\begin{aligned}
&\int_{T_{q,{\ell}}}\frac{E(\bx_p, \by)\sigma_s(\by)  }{|\bx_p - \by|^{d-1}}  V_{}\left(\frac{|\bx_p -\by| }{h}+(k-l)\right)  d\by\approx W(\bx_p, \bx_q)\sigma_s(\bx_q)  V_{}\left(\frac{|\bx_p -\bx_q| }{h}+(k-l)\right)  \,,\\
&\int_{T_{q,{\ell}}}\frac{E(\bx_p, \by)  }{|\bx_p - \by|^{d-1}}  V_{}\left(\frac{|\bx_p -\by| }{h}+(k-l)\right)  d\by \approx W(\bx_p, \bx_q)  V_{}\left(\frac{|\bx_p -\bx_q| }{h}+(k-l)\right)  \,,
\end{aligned}
\end{equation}
where $W(\bx_p, \bx_q)$ denotes the following local weakly singular integral,
\begin{equation}
W(\bx_p, \bx_q) = E(\bx_p,\bx_q)\int_{T_{q,\ell}} \frac{1}{|\bx_p - \bz|^{d-1}} d\bz\,.
\end{equation}
Therefore we obtain an explicit linear system of~\eqref{EQ: DISCRETIZED} as follows:
\begin{equation}\label{EQ: DISCRETIZED2}
\begin{aligned}
&{w}_{l}^p =\frac{1}{\nu_{d-1}}\sum_{q=1}^M \sum_{k=0}^l W(\bx_p, \bx_q)   V_{}\left(\frac{|\bx_p -\bx_q| }{h}+(k-l)\right)  (\sigma_s(\bx_q){w}_{k}^q + {c}_{k}^q)\,,
\end{aligned}
\end{equation}
where the left-hand-side is only relevant to the time-step $l$, while the right-hand-side involves the terms on time-steps $k\le l$. Note that the evaluations of $V$ actually only has at most two nonzero values for each pair of $q$ and $l$. In particular, if the time step $h$ is smaller than the grid size $\ell$, then \eqref{EQ: DISCRETIZED2} is an explicit scheme in time.

\begin{remark}
    As~\cite{ren2019fast} points out, the function $W(\bx,\by)$ could be analytically evaluated under certain circumstances. For example, consider the two-dimensional case ($d=2$), and let the discretization $T_{p,\ell}$ be identical and square. Let $T(\by)$ be a square centered at $\by= (y_1, y_2)$ with side length of $\ell$. Let $\bx = (x_1, x_2)$, $t_1 = y_1 - x_1$, and $t_2 = y_2 - x_2$. It is then easy to verify that
    \begin{equation}
    \int_{T(\by)} |\bx - \bz|^{1-d} d\bz =  \sum_{i=-1}^1\sum_{j=-1}^1 ij F\left(t_1 + i\frac{\ell}{2} ,\; t_2 + j\frac{\ell}{2}\right) 
    \end{equation} 
    with the function $F(r, s)$ given by
    \begin{equation}\nonumber
    F(r, s) = \sgn(r) \sgn(s) \Big( |r|\log(|s| + \sqrt{r^2 + s^2})  + |s|\log(|r| +\sqrt{r^2 + s^2})-|r|\log |r| -|s|\log |s| \Big).
    \end{equation} 
    This calculation works for any $(\bx, \by)$ pair over $\Omega\times\Omega$ and any side length $\ell > 0$.
\end{remark}

\subsection{Error analysis}
We now estimate the numerical error for the discretization in~\eqref{EQ: DISCRETIZED2}.  First, we prove the following lemma.
\begin{lemma}\label{LEM: DISCRET}
    For any $k \in (k_0, 1)$, there exists an $\ell_0 > 0$ (depending on $k$) such that for any cell size $\ell \in(0, \ell_0)$ and any $\bx\in\Omega$, 
    \begin{equation}\label{EQ: ELL}
    \frac{1}{\nu_{d-1}}\sum_{q=1}^M \int_{T_{q,\ell}} \frac{E(\bx,\bx_q)}{|\bx-\by|^{d-1}} \sigma_s(\bx_q) d\by < k\,.
    \end{equation}
\end{lemma}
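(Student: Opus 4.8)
The plan is to compare the discrete sum in \eqref{EQ: ELL} with its exact (continuous) analogue $\frac{1}{\nu_{d-1}}\int_{\Omega}\frac{E(\bx,\by)}{|\bx-\by|^{d-1}}\sigma_s(\by)\,d\by$, which was already shown to be $\le k_0<1$ in the proof of Lemma~\ref{LEM: WELL} (this is precisely the estimate $\|\cK\|_{op}\le k_0$). The difference of the two is a frozen-coefficient quadrature error, and I will show it is $\cO(\ell)$ uniformly in $\bx$; then $\ell_0:=(k-k_0)/C$ does the job.

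First, for completeness, I would recall why the continuous integral is $\le k_0$: since $\sigma_s\le k_0\sigma_t$ pointwise by $\cA$, it suffices to bound the $\sigma_t$-weighted version by $1$. Changing to polar coordinates at $\bx$, $\by=\bx-r\bv$ with $r=|\bx-\by|$ and $d\by=\nu_{d-1}r^{d-1}\,dr\,d\bv$, the weak singularity cancels and
\begin{equation}\nonumber
\frac{1}{\nu_{d-1}}\int_{\Omega}\frac{E(\bx,\by)}{|\bx-\by|^{d-1}}\sigma_t(\by)\,d\by = \int_{\bbS^{d-1}}\int_0^{\tau_{-}(\bx,\bv)} E(\bx,\bx-r\bv)\,\sigma_t(\bx-r\bv)\,dr\,d\bv .
\end{equation}
Since $\tfrac{d}{dr}E(\bx,\bx-r\bv)=-\sigma_t(\bx-r\bv)E(\bx,\bx-r\bv)$, the inner integral equals $1-E(\bx,\bx-\tau_{-}(\bx,\bv)\bv)\le 1$, and $d\bv$ being the normalized measure yields the bound $1$; multiplying by $k_0$ gives the continuous estimate.

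Next, since $\{T_{q,\ell}\}$ partitions $\Omega$, the left-hand side of \eqref{EQ: ELL} minus the continuous integral equals $\frac{1}{\nu_{d-1}}\sum_{q}\int_{T_{q,\ell}}\frac{E(\bx,\bx_q)\sigma_s(\bx_q)-E(\bx,\by)\sigma_s(\by)}{|\bx-\by|^{d-1}}\,d\by$. The crucial ingredient is that $\by\mapsto E(\bx,\by)\sigma_s(\by)$ is Lipschitz with a constant $L$ independent of $\bx$; I would obtain this from the representation
\begin{equation}\nonumber
E(\bx,\by)=\exp\!\Big(-|\bx-\by|\int_0^1\sigma_t\big(\bx+\theta(\by-\bx)\big)\,d\theta\Big),
\end{equation}
in which the exponent is a product of the $1$-Lipschitz, bounded function $|\bx-\by|$ and a $C^1$, bounded function of $\by$ (using $\sigma_t\in C^{1,\alpha}(\overline\Omega)$), hence Lipschitz uniformly in $\bx$; composing with the $1$-Lipschitz map $t\mapsto e^{-t}$ on $[0,\infty)$ and multiplying by the bounded Lipschitz function $\sigma_s$ preserves this. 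Consequently $|E(\bx,\bx_q)\sigma_s(\bx_q)-E(\bx,\by)\sigma_s(\by)|\le L|\bx_q-\by|\le L\ell$ for $\by\in T_{q,\ell}$, so the quadrature error is at most
\begin{equation}\nonumber
\frac{L\ell}{\nu_{d-1}}\int_{\Omega}\frac{d\by}{|\bx-\by|^{d-1}} \le L\,\operatorname{diam}(\Omega)\,\ell =: C\ell ,
\end{equation}
where I used $\int_\Omega|\bx-\by|^{1-d}\,d\by\le\nu_{d-1}\operatorname{diam}(\Omega)$ (the singularity is integrable since $1-d>-d$). Combining the two steps, the left side of \eqref{EQ: ELL} is $\le k_0+C\ell$ for all $\bx\in\Omega$, so any $\ell_0\le(k-k_0)/C$ works.

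The step I expect to require the most care is the uniform Lipschitz bound for $E(\bx,\cdot)\sigma_s(\cdot)$ across the diagonal $\by=\bx$, where $|\bx-\by|$ fails to be differentiable; the representation of $E$ above as $\exp$ of $|\bx-\by|$ times a smooth bounded factor is what makes both the Lipschitz bound and its $\bx$-uniformity transparent. The remaining ingredients — the polar change of variables, the fundamental theorem of calculus for the exponential attenuation factor, and integrability of the weak singularity — are routine.
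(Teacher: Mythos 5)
Your proof is correct, and it reaches the conclusion by a genuinely simpler route than the paper's at the key quantitative step. Both arguments share the same skeleton: compare the frozen-coefficient sum with the continuous integral $\frac{1}{\nu_{d-1}}\int_\Omega E(\bx,\by)\sigma_s(\by)|\bx-\by|^{1-d}\,d\by \le k_0$ (you re-derive this bound via polar coordinates and the identity $\frac{d}{dr}E(\bx,\bx-r\bv)=-\sigma_t(\bx-r\bv)E(\bx,\bx-r\bv)$, which is exactly the estimate the paper imports from Lemma~\ref{LEM: WELL}), and then show the quadrature defect is $o(1)$ in $\ell$ uniformly in $\bx$. Where you differ is in estimating that defect: the paper splits it into two pieces $\cI_1,\cI_2$ (freezing $E$ and $\sigma_s$ separately), Taylor-expands using the full $C^{1,\alpha}$ regularity, treats the diagonal cell separately, and counts cells in annular shells $N_n$ to obtain the sharper rate $\cO(\ell^{1+\alpha})$; you instead observe, via $E(\bx,\by)=\exp\bigl(-|\bx-\by|\int_0^1\sigma_t(\bx+\theta(\by-\bx))\,d\theta\bigr)$ and convexity of $\Omega$, that $\by\mapsto E(\bx,\by)\sigma_s(\by)$ is Lipschitz uniformly in $\bx$ (including across the nonsmooth diagonal), and then pay only the integrable weak singularity, $\frac{1}{\nu_{d-1}}\int_\Omega|\bx-\by|^{1-d}\,d\by\le \operatorname{diam}(\Omega)$, to get $\cO(\ell)$. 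Your rate is weaker, but the lemma only requires smallness, so $\ell_0\simeq (k-k_0)/C$ works just as the paper's $\ell_0\simeq (k-k_0)^{1/(1+\alpha)}$ does; what you lose is the $\cO(\ell^{1+\alpha})$ bound that the paper reuses verbatim for the term $\cI_1$ in Theorem~\ref{THM: ERR}, though even there the final error is dominated by $\omega(h)+\omega(\ell)$, so your estimate would suffice. What you gain is an argument that avoids the shell-counting bookkeeping and the centroid/Taylor cancellation, and that is stated cleanly for arbitrary $\bx\in\Omega$ rather than only collocation points.
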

\begin{proof}
    Lemma~\ref{LEM: WELL} says 
    \begin{equation}
    \frac{1}{\nu_{d-1}}\sum_{q=1}^M \int_{T_{q,\ell}} \frac{E(\bx,\by)}{|\bx-\by|^{d-1}} \sigma_s(\by) d\by = \frac{1}{\nu_{d-1}}\int_{\Omega}  \frac{E(\bx,\by)}{|\bx-\by|^{d-1}} \sigma_s(\by) d\by \le k_0\,.
    \end{equation}
    Then
    \begin{equation}
    \begin{aligned}
    \frac{1}{\nu_{d-1}}\sum_{q=1}^M \int_{T_{q,\ell}} \frac{E(\bx,\by)}{|\bx-\by|^{d-1}} \sigma_s(\by) d\by -   \frac{1}{\nu_{d-1}}\sum_{q=1}^M \int_{T_{q,\ell}} \frac{E(\bx,\bx_q)}{|\bx-\by|^{d-1}} \sigma_s(\bx_q) d\by =  \cI_1 + \cI_2\,,
    \end{aligned}
    \end{equation}
    where 
    \begin{equation}
    \begin{aligned}
    \cI_1 &= \frac{1}{\nu_{d-1}}\sum_{q=1}^M \int_{T_{q,\ell}} \frac{E(\bx,\by)-E(\bx,\bx_q)}{|\bx-\by|^{d-1}} \sigma_s(\by) d\by\,,\\
    \cI_2 &= \frac{1}{\nu_{d-1}}\sum_{q=1}^M \int_{T_{q,\ell}} \frac{E(\bx,\bx_q)}{|\bx-\by|^{d-1}} (\sigma_s(\by)-\sigma_{s}(\bx_q) d\by\,,
    \end{aligned}
    \end{equation}
Since $\sigma_t\in C^{1,\alpha}(\overline{\Omega})$, we obtain $\forall \by\in\Omega$, 
\begin{equation}
\sigma_t(\by) - \sigma_t(\bx_q) = \nabla \sigma_t(\bx_q)\cdot (\by - \bx_q) + \cO(|\by-\bx_q|^{1+\alpha})\,.
\end{equation}
then when $\bx_p\neq \bx_q$, $E(\bx_p, \by)$ is differentiable in $T_{q,\ell}$ that
\begin{equation}
\begin{aligned}
E(\bx_p, \by) - E(\bx_p, \bx_q) = \nabla_{\by} E(\bx_p, \bx_q)\cdot (\by - \bx_q) + \cO(|\by - \bx_p|^{1+\alpha})\,,
\end{aligned}
\end{equation}
when $\bx_p = \bx_q$, we derive that
\begin{equation}
E(\bx_p, \by) - E(\bx_p, \bx_q) = E(\bx_p, \by) - 1 = \cO(|\bx_p - \by|)\,.
\end{equation}
Fix $\bx_p$, let the set $N_n := \{\bx_q:(n-1)\ell \le |\bx_p-\bx_q| < n\ell  \}$ which stands for the collocation points within a thin shell, then $|N_n|\le \cO(n^{d-1})$. Then we can estimate $\cI_1$ by
\begin{equation}
\begin{aligned}
|\cI_1| &\le C \sum_{n=1}^{\cO(\ell^{-1})}\sum_{\bx_q\in N_n} \left|\int_{T_{q,\ell}}  \frac{E(\bx_p,\by)-E(\bx_p, \bx_q)}{|\bx_p - \by|^{d-1}} d\by \right| \\&\le C \left|\int_{T_{p,\ell}}  \frac{E(\bx_p,\by)-1}{|\bx_p - \by|^{d-1}} d\by \right| + C \sum_{n=2}^{\cO(\ell^{-1})} (n)^{d-1} \frac{\ell^{1+\alpha}}{ ((n-1)\ell)^{d-1}} \ell^d \\&= \cO(\ell^2) + \cO(\ell^{1+\alpha}) = \cO(\ell^{1+\alpha}).
\end{aligned}
\end{equation}
Use the similar approach, we also obtain $|\cI_2|\le \cO(\ell^{1+\alpha})$. Therefore 
\begin{equation}\label{EQ: DISCRET ERROR}
 \frac{1}{\nu_{d-1}}\sum_{q=1}^M \int_{T_{q,\ell}} \frac{E(\bx,\bx_q)}{|\bx-\by|^{d-1}} \sigma_s(\bx_q) d\by \le k_0 + \cO(\ell^{1+\alpha})\,,
\end{equation}
therefore when $\ell < \cO((k-k_0)^{1/(1+\alpha)})$ is sufficiently small, the right hand side of~\eqref{EQ: DISCRET ERROR} is strictly less than $k$.
\end{proof}
\begin{theorem}\label{THM: ERR}
    Suppose $\ell$ is small enough such that~\eqref{EQ: ELL} is satisfied. Define the spatially piecewise constant solution $\bar{w}_h(t, \bx)$
    \begin{equation}
    \bar{w}_h(t,\bx) = \sum_{l=0}^N \sum_{p=1}^M \bar{w}^p_{l}\chi_p(\bx) \phi_{l}(t)\,,
    \end{equation}
    where $\bar{w}_{l}^p$ is defined as~\eqref{EQ:PIECEWISE CONSTANT}.
    Then $|\bar{w}_h(t, \bx_p) - \aver{u}_h(t,\bx_p)|\le \cO(\omega(h) +\omega(\ell))$ for any collocation point $\bx_p$.
\end{theorem}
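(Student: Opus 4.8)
The plan is to compare the exact semidiscrete solution $\aver{u}_h$ (which solves the time-discretized integral equation \eqref{EQ: WL} with the true spatial integrals) against the fully discrete solution $\bar w_h$ (which solves the collocated system \eqref{EQ: DISCRETIZED2}), and to show the difference at collocation points is $\cO(\omega(\ell))$; combined with Lemma~\ref{LEM: ERROR TIME DISCRET} this yields the stated bound. The natural device is a discrete error vector $e_l^p := \bar w_l^p - \aver{u}_h(t_l,\bx_p)$ for $0 \le l \le N$, $1 \le p \le M$, measured in the $\ell^\infty$ norm $\|e\|_\infty := \max_{l,p}|e_l^p|$. Because the time summation in \eqref{EQ: DISCRETIZED2} is causal ($k \le l$ and in fact only one or two values of $k$ contribute), the error propagates forward in time without amplification once one has a uniform contraction bound on the spatial operator — this is exactly what Lemma~\ref{LEM: DISCRET} supplies.

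First I would write the defining equation \eqref{EQ: WL} evaluated at $\bx = \bx_p$ and subtract \eqref{EQ: DISCRETIZED2}. The difference splits into two pieces: (i) a \emph{consistency} term $\eta_l^p$ measuring, for each cell $T_{q,\ell}$, the gap between $\int_{T_{q,\ell}} \frac{E(\bx_p,\by)\sigma_s(\by)}{|\bx_p-\by|^{d-1}} V(\tfrac{|\bx_p-\by|}{h}+k-l)\,\aver{u}_h(t_k-|\bx_p-\by|,\by)\,d\by$ and its collocation approximation $W(\bx_p,\bx_q)\sigma_s(\bx_q)V(\tfrac{|\bx_p-\bx_q|}{h}+k-l)\aver{u}_h(t_k,\bx_q)$ (and similarly for the source term with $c_k$), and (ii) a \emph{propagated} term of the form $\frac{1}{\nu_{d-1}}\sum_q \sum_{k\le l} W(\bx_p,\bx_q) V(\cdots)\sigma_s(\bx_q)\,e_k^q$. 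For (ii), summing $|W(\bx_p,\bx_q)|$ against the (at most two) nonzero values of $V$ is controlled by Lemma~\ref{LEM: DISCRET}: one gets $\sum_q |W(\bx_p,\bx_q)|\sigma_s(\bx_q) < \nu_{d-1} k$ for $\ell$ small (here one uses that $W(\bx_p,\bx_q) = E(\bx_p,\bx_q)\int_{T_{q,\ell}}|\bx_p-\bz|^{1-d}d\bz$ is precisely the integral appearing in \eqref{EQ: ELL} up to the $\sigma_s(\bx_q)$ factor, and that the hat function $V$ is bounded by $1$), so term (ii) is bounded by $k\|e\|_\infty$ plus a negligible correction. Absorbing this into the left-hand side gives $\|e\|_\infty \le \frac{1}{1-k}\max_{l,p}|\eta_l^p|$.

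The remaining work is to bound the consistency term $\eta_l^p = \cO(\omega(\ell))$. On each cell $T_{q,\ell}$ with $\bx_q \neq \bx_p$ the integrand's slowly varying part — $E(\bx_p,\by)$, $\sigma_s(\by)$, and $\aver{u}_h(\cdot,\by)$ (whose spatial modulus of continuity is $\omega$ by Lemma~\ref{LEM: SPACE}, with the time-shift $|\bx_p-\by|$ contributing another $\omega(|\by-\bx_q|)$ via Lemma~\ref{LEM:TIME}) — differs from its value at $\bx_q$ by $\cO(\omega(\ell))$ uniformly, while the weakly singular factor $|\bx_p-\by|^{1-d}$ is handled exactly by the definition of $W$; summing the $\cO(\omega(\ell))\int_{T_{q,\ell}}|\bx_p-\by|^{1-d}d\by$ contributions over all $q$ reproduces the same geometric-series argument as in Lemma~\ref{LEM: DISCRET} (shells $N_n$ of size $\cO(n^{d-1})$) and stays $\cO(\omega(\ell))$ because $\sum_q \int_{T_{q,\ell}}|\bx_p-\by|^{1-d}d\by \le \int_\Omega |\bx_p-\by|^{1-d}d\by = \cO(1)$. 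The single diagonal cell $q=p$, where the kernel is genuinely singular, is handled separately: there $\int_{T_{p,\ell}}|\bx_p-\by|^{1-d}d\by = \cO(\ell)$, the hat function and the bounded factors contribute $\cO(1)$, so this term alone is $\cO(\ell) = \cO(\omega(\ell))$. One must also account for the boundary cells $T_{p,\ell}$ replaced by $T'_{p,\ell}$, but by construction the discarded mass is $\cO(\ell^2)$ and there are $\cO(\ell^{1-d})$ such cells at distance $\gtrsim \ell$ except near $\bx_p$, contributing at most $\cO(\ell)$. Collecting everything gives $\max_{l,p}|\eta_l^p| = \cO(\omega(\ell))$, hence $\|e\|_\infty = \cO(\omega(\ell))$, and the triangle inequality with Lemma~\ref{LEM: ERROR TIME DISCRET} finishes the proof.

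I expect the main obstacle to be the consistency estimate for $\eta_l^p$, specifically making rigorous the claim that replacing $\aver{u}_h(t_k - |\bx_p-\by|,\by)$ by $\aver{u}_h(t_k,\bx_q)$ inside each cell costs only $\cO(\omega(\ell))$: this requires both the spatial regularity (Lemma~\ref{LEM: SPACE}) and the temporal regularity (Lemma~\ref{LEM:TIME}) of the semidiscrete solution, and one must check that these moduli of continuity are inherited by $\aver{u}_h$ uniformly in $h$ — which follows because $\aver{u}_h$ is built from the same fixed-point structure (Lemma~\ref{LEM: WELL}) with the $\cO(\omega(h))$-perturbed source $f_h$. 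The treecode/far-field approximation error is not part of this theorem — Theorem~\ref{THM: ERR} concerns only the discretization \eqref{EQ: DISCRETIZED2} itself — so I would not address it here.
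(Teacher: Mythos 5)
Your proposal follows essentially the same route as the paper's proof: subtract the collocated system from the time-semidiscrete equation at the collocation points, absorb the propagated error using the discrete contraction bound of Lemma~\ref{LEM: DISCRET}, and bound the consistency terms (replacement of $E$, of $f_h$, and of $\sigma_s\aver{u}_h$ cell by cell) via Lemmas~\ref{LEM:TIME} and~\ref{LEM: SPACE}, with the $\omega(h)$ contribution entering exactly where you flag it, namely in transferring the moduli of continuity from $\aver{u}$ to $\aver{u}_h$. The only superfluous step is the concluding triangle inequality with Lemma~\ref{LEM: ERROR TIME DISCRET}: the theorem compares $\bar{w}_h$ with $\aver{u}_h$, not with $\aver{u}$, so no such step is needed.
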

\begin{proof}
Define the difference $\bar{e}(t,\bx) := \aver{u}_h(t,\bx) - \bar{w}_h(t,\bx)$, which satisfies
\begin{equation}\nonumber
\begin{aligned}
\bar{e}(t,\bx_p) &= \frac{1}{\nu_{d-1}}\sum_{q=1}^M \int_{T_{q,\ell}} \frac{E(\bx_p, \by)}{|\bx_p - \by|^{d-1}}\left( \sigma_s(\by) \aver{u}_h(t-{|\bx_p - \by|}, \by) + f_h(t-{|\bx_p - \by|}, \by) \right) d\by \\
& \quad - \frac{1}{\nu_{d-1}}\sum_{q=1}^M \int_{T_{q,\ell}} \frac{E(\bx_p, \bx_q)}{|\bx_p - \by|^{d-1}}\left( \sigma_s(\bx_q) \bar{w}_h(t-{|\bx_p - \bx_q|}, \bx_q) + f_h(t-{|\bx_p - \bx_q|}, \bx_q) \right) d\by \\
&= \cK_{h,\ell} \overline{e} +\cI_1 +\cI_2 +\cI_3\,,
\end{aligned}
\end{equation}
where the integral operator $\cK_{h,\ell}$ is
\begin{equation}
\cK_{h.\ell}\overline{e} := \frac{1}{\nu_{d-1}}\sum_{q=1}^M \int_{T_{q,\ell}} \frac{E(\bx_p, \bx_q)}{|\bx_p - \by|^{d-1}}\left( \sigma_s(\bx_q) \bar{e}(t-{|\bx_p - \bx_q|},\bx_q) \right)d\by\,,
\end{equation}
which is a contraction operator in $L^{\infty}([0,T]\times\Omega)$ by Lemma~\ref{LEM: DISCRET}. The quantities $\cI_i, i=1,2,3$ are defined as:
\begin{equation}\nonumber
\begin{aligned}
\cI_1 &= \frac{1}{\nu_{d-1}}\sum_{q=1}^M \int_{T_{q,\ell}} \frac{E(\bx_p,\by)-E(\bx_p, \bx_q)}{|\bx_p - \by|^{d-1}}\left( \sigma_s(\by) \aver{u}_h(t-{|\bx_p - \by|},\by) + f_h(t-{|\bx_p - \by|}, \by) \right)d\by\,, \\
\cI_2 &= \frac{1}{\nu_{d-1}}\sum_{q=1}^M \int_{T_{q,\ell}}\frac{E(\bx_p,\bx_q)}{|\bx_p-\by|^{d-1}}\left(f_h(t-{|\bx_p-\by|},\by) -  f_h(t-{|\bx_p-\bx_q|},\bx_q)\right)d\by\,,\\
\cI_3 &= \frac{1}{\nu_{d-1}}\sum_{q=1}^M \int_{T_{q,\ell}}\frac{E(\bx_p,\bx_q)}{|\bx_p-\by|^{d-1}}\left(\sigma_s(\by)\aver{u}_h(t-{|\bx_p-\by|},\by) -  \sigma_s(\bx_q)\aver{u}_h(t-{|\bx_p-\bx_q|},\bx_q)\right)d\by\,.
\end{aligned}
\end{equation}
The estimate $|\cI_1|\le \cO(\ell^{1+\alpha})$ is the same as the in the proof of Lemma~\ref{LEM: DISCRET}.
For the estimate of  $\cI_2$, it is simple to derive that
\begin{equation}
|f_h(t-{|\bx_p-\by|},\by) -  f_h(t-{|\bx_p-\bx_q|},\bx_q)| \le \cO(\omega(h)) + \cO(\omega(\ell))
\end{equation}
using the relation between $f$ and $f_h$. Therefore $|\cI_2|\le \cO(\omega(h) + \omega(\ell))$. By Lemma~\ref{LEM:TIME} and Lemma~\ref{LEM: SPACE}, we have $|\cI_3| \le \cO(\omega(h)+\omega(\ell) )$ as well. 
From the contraction property of $\cK_{h,\ell}$, we get the estimate of the numerical error of the discretization schemes $\bar{e}(t,\bx)\le \cO(\omega(h) + \omega(\ell))$.
\end{proof}
\subsection{Treecode algorithm}


To solve the linear system~\eqref{EQ: DISCRETIZED2} at the $l$-th time step, we need to evaluate the summation on the right-hand-side with $k\le l$ which is the main cost when using the integral formulation. Direct evaluation of such a summation will take at least $\cO(M^2)$ operations. We will accelerate the summation in~\eqref{EQ: DISCRETIZED2} using the treecode algorithm~\cite{barnes1986hierarchical}. 

In the treecode algorithm, the point set $\{ \bx_p \}_{p=1}^M$ is partitioned into hierarchical clusters $\{C_s\}$ with $k$-d tree structure, we call the leaves of the $k$-d tree as leaf-clusters, if a cluster $C_t$ is produced by partitioning $C_s$ directly, then we call $C_t$ as a child-cluster of $C_s$. 
To determine whether or not a cluster $C_s$ is in the far-field of the point $\bx_p$, we let $r$ be the radius of cluster $C_s$ and $R$ is the distance between the center of $C_s$ to $\bx_p$, when $r/R\le \theta$ for some user-specified parameter $\theta$, then the cluster $C_s$ is assumed to be in the far field of $\bx_p$, otherwise the cluster is assumed to be in the near field of $\bx_p$. Using the hierarchical structure, the summation in the following form
\begin{equation}\label{EQ:SUMMATION}
u(\bx_p) = \sum_{q=1}^M K(\bx_p, \bx_q) f( \bx_q), \quad 1\le p\le M
\end{equation}
can be re-grouped into
\begin{equation}
u(\bx_p) = \sum_{s\in S} {u}_s(\bx_p),\quad {u}_s(\bx_p):=\sum_{\bx_q\in C_s}K(\bx_p, \bx_q) f(\bx_q),
\end{equation}
where $S$ is a certain index set depending on $\bx_p$, $C_s$ denotes a cluster in the far-field of $\bx_p$ or a leaf-cluster such that $\Large\cup_{s\in S} C_s = \{ \bx_p \}_{p=1}^M$ and $C_s \Large\cap C_t = \emptyset$ when $s\neq t$. When the cluster $C_s$ is in the far-field of the point $\bx_p$, the quantity $u_s$ can be evaluated through some approximation with a lower computational cost. Here we follow the interpolative idea in Fong-Darve's fast multipole method~\cite{fong2009black} and~\cite{wang2019kernel}. Let $T_k$ be the first-kind Chebyshev polynomial of degree $k$ defined on $[-1,1]$, then we define the interpolation function 
\begin{equation}\label{eq:Cheby}
S_n(\bx, \by) = \prod_{i=1}^{d}\left(\frac{1}{n} +\frac{2}{n}\sum_{k=1}^{n-1}T_{k}(x_i) T_k(y_i)\right)\,,
\end{equation}
where $\bx = (x_1,\dots, x_d)\in [-1,1]^d$, $\by = (y_1,\dots, y_d)\in[-1,1]^d$. Assume the cluster $C_j$ is contained in a hypercube $X_j$ that $C_j\subset X_j:=  \prod_{i=1}^{d}[a_i, b_i]$ which stays in the far-filed of $\bx_p$, we can define the linear transformation $\cL_j: [-1,1]^d\to X$ that 
\begin{equation}
\cL_j \bx = \frac{\ba + \bb}{2} + \frac{\bb - \ba}{2}\odot \bx, \quad \ba = (a_1,\dots, a_d),\quad \bb = (b_1,\dots, b_d),
\end{equation} 
where $\odot$ denote Hadamard product between two vectors. The transformation $\cL$ maps the standard Chebyshev points in $[-1,1]^d$ to the scaled Chebyshev points in $X$. Then we can approximate $u_s$ by the following interpolation formulation~\cite{dutt1996fast},
\begin{equation}\label{eq:int}
u_s\approx \sum_{m=1}^{n^d} K(\bx_p, \cL_s\by_m) Z_{s,m}
\end{equation}
with $Z_{s,m}$ evaluated in the following two cases:
\begin{enumerate}
    \item If $C_s \text{ is a leaf-cluster}$, then 
    $$ Z_{s,m} = 
    \sum_{\bx_q\in C_s} S_n(\cL_s^{-1}\bx_q, \by_m)  {f}(\bx_q)\,,$$
    \item Otherwise, 
    $$
    Z_{s,m} = \sum_{   \substack{t \\ C_t \text{ is a child-cluster of } C_s} }\sum_{r=1}^{n^d} S_n(\cL_s^{-1} L_t \by_r, \by_m) Z_{t, r}\,,
    $$
\end{enumerate}
where $\{ \by_m \}_{m=1}^{n^d}\in [-1,1]^d$ is the set of $d$-dimensional Chebyshev interpolation points formed by tensor product of the $n$th order Chebyshev points on $[-1,1]$. Then the summation~\eqref{EQ:SUMMATION} can be approximated by
\begin{equation}
u(\bx_p)\approx \sum_{s\in S} \sum_{m=1}^{n^d} K(\bx_p, \cL_s\by_m) Z_{s,m}\,.
\end{equation}
Applying this approximation to~\eqref{EQ: DISCRETIZED2}, we obtain the accelerated summation for the right-hand-side. Since the local summations $Z_{s,m}$ can be precomputed from bottom to top on the $k$-d tree with $\cO(M)$ time complexity, the computation complexity will be reduced to $\cO( M\log M)$ for each time step. If the time step $h\simeq \cO(\ell)$, then total number of time interval $N\simeq \cO(M^{1/d})$, which means the total computation complexity with treecode algorithm is $\cO(M^{1+1/d}\log M)$.

\begin{remark}
    In the above treecode algorithm, the evaluation of $E(\bx, \by)$ in the kernel function $K(\bx,\by)$ is of computation cost $\cO(1)$ if the integral of $\sigma_t$ is known analytically or the involved evaluation is precomputed. In practice, if the coefficient $\sigma_t$ is only known on the collocation points, the evaluation cost of $E(\bx, \by)$ is proportional to the number of grids  along the segment connecting $\bx$ and $\by$, which is $\cO(\frac{|\bx - \by|}{\ell})$ with a naive summation. In this case, the total precomputing cost is at $\cO(M^{1+1/d}\log M)$. Hence the total computational complexity is still $\cO(M^{1+1/d}\log M)$ if $h\simeq \cO(\ell)$.
\end{remark}

\begin{remark}\label{re:kernel}
The accuracy of using the Chebyshev polynomial~\eqref{eq:Cheby} in the interpolation formulation~\eqref{eq:int} depends on the smoothness of the kernel function. Since piecewise linear interpolation is used in time, the kernel function is only piecewise linear in space. Hence,  the numerical solution to the treecode algorithm will not approximate the true solution of the discretized linear system very accurately even if high order interpolation is used. However, as long as this interpolation error matches the numerical discretization error for the integral equation, for example, see the Experiment II of Sec~\ref{sec:test}, when the interpolation order $n=3$, our treecode based algorithm already provides a fast solver for time-dependent RTE. When the mesh becomes finer and finer, the order of interpolation may need to be increased to maintain the accuracy.

\end{remark}

\section{Numerical experiments}
\label{sec:test}
In this section, we demonstrate the fast algorithm for the time-dependent radiative transport equation with numerical experiments in 2D\footnote{The code repository is hosted at \href{https://github.com/lowrank/treecode_rte}{https://github.com/lowrank/treecode\_rte}.}, the numerical experiments are implemented in C++, the treecode algorithm is naturally parallelized with OpenMP.  The computational domain is fixed as $\Omega = [0,1]^2$ and $T = 1$ for the following experiments. For simplicity, we take the uniform spatial discretization with cell length $\ell=M^{-1/2}$ and the time step $h = \ell$, where $M$ denotes the total number of collocation points in $\Omega$. We also denote $t_{dir}$ as the running time by computing~\eqref{EQ: DISCRETIZED2} directly and $t_{tree}$ as the running time for~\eqref{EQ: DISCRETIZED2} with the treecode algorithm. The source function $f$ is chosen as the following:
\begin{equation}
   f(t,\bx) = 4 t^2 \exp(-40|\bx - \br(t)|^2)\,,
\end{equation}
where $\br(t) = (\frac{1}{2} + \frac{1}{5}\cos(4\pi t), \frac{1}{2} + \frac{1}{5}\sin(4\pi t))$, which represents a Gaussian point source with increasing intensity traveling two rounds around the center of $\Omega$. All the numerical experiments are performed using a desktop with $12$ Intel Xeon CPUs at $2.27$GHz  and $32$GB memory.
\subsection{Experiment I}
In the first experiment, we take $\sigma_s \equiv 5.0$ and $\sigma_t\equiv 5.2$. Let $\aver{u}_{dir}$ and $\aver{u}_{tree}$ be the discretized solutions to~\eqref{EQ: DISCRETIZED2} with and without treecode algorithm to accelerate respectively. We use
\begin{equation}
    E_{\ell^{2}} = \frac{\|\aver{u}_{dir} - \aver{u}_{tree}\|_{\ell^{2}} }{\|\aver{u}_{dir}\|_{\ell^2}}
\end{equation}
to measure the difference.  We show comparisons of solutions in two cases. 

\noindent {\bf{Case I.\;}}  We fix the Chebyshev polynomial interpolation of order $n=6$ and let the parameter $\theta$ (the ratio of the cluster size and the separation distance) in treecode algorithm take values: $\theta=0.3,0.4,0.5,0.6,0.7$. The numerical results are shown in Tab~\ref{TAB:1}. Since the coefficient $\sigma_t$ is a constant, we evaluate $E(\bx,\by) = \exp(-\sigma_t|\bx-\by|)$ directly, one can observe that the growth of running time $t_{dir}$ with respect to  $M$ is almost at order of $\cO(M^{5/2})$ and the growth of $t_{tree}$ is relatively slower. As the parameter $\theta$ decreases, the approximation error $E_{\ell^{2}}$ becomes smaller. 
\begin{table}[!htb]
    \centering
    \caption{The computational time and relative error between the solutions with and without the treecode algorithm under different values of the parameter $\theta$ and collocation points $M$. The Chebyshev polynomial interpolation's order is fixed as $n=6$. }\vspace{0.1cm}
    \label{TAB:1}
    \begin{tabular}{c c c c c c c}
        \hline
        $M$ & $n$ & $\theta$ & $t_{dir}(s)$ & $t_{tree}(s)$ & $E_{\ell^{2}}$ \rule{0pt}{2.6ex}\rule[-1.2ex]{0pt}{0pt}\\
        \hline
         \rule{0pt}{4ex}$2,304$ & $6$ & $0.7$ & $24.30$ & $3.96$ & $4.59\times 10^{-2}$ \\
        $2,304$ & $6$ & $0.6$ & $24.30$ & $4.70$ & $2.59\times 10^{-2}$ \\
        $2,304$ & $6$ & $0.5$ & $24.30$ & $5.80$ & $1.20\times 10^{-2}$\\
        $2,304$ & $6$ & $0.4$ & $24.30$ & $7.25$ & $3.88\times 10^{-3}$\\
        $2,304$ & $6$ & $0.3$ & $24.30$ & $9.36$ & $6.53\times 10^{-4}$\\
        \vspace{0.2cm}\\
        $4,096$ & $6$ & $0.7$ & $95.15$ & $10.85$ & $6.07\times 10^{-2}$\\
        $4,096$ & $6$ & $0.6$ & $95.15$ & $12.89$ & $3.99\times 10^{-2}$\\
        $4,096$ & $6$ & $0.5$ & $95.15$ & $16.16$ & $2.43\times 10^{-2}$\\
        $4,096$ & $6$ & $0.4$ & $95.15$ & $24.30$ & $1.28\times 10^{-2}$\\
        $4,096$ & $6$ & $0.3$ & $95.15$ & $29.05$ & $4.81\times 10^{-3}$\\
        \vspace{0.2cm}\\
        $6,400$ & $6$ & $0.7$ & $300.7$ & $23.94$ & $6.27\times 10^{-2}$\\
        $6,400$ & $6$ & $0.6$ & $300.7$ & $29.84$ & $4.16\times 10^{-2}$\\
        $6,400$ & $6$ & $0.5$ & $300.7$ & $38.43$ & $2.54\times 10^{-2}$\\
        $6,400$ & $6$ & $0.4$ & $300.7$ & $50.62$ & $1.35\times 10^{-2}$\\
        $6,400$ & $6$ & $0.3$ & $300.7$ & $70.25$ & $5.01\times 10^{-3}$\\
        \hline
    \end{tabular}
\end{table}

\vspace{0.2cm}
\noindent{\bf{Case II.\;}} We fix the parameter $\theta = 0.3$ and let the Chebyshev polynomial's order $n$ take values $n=2,3,4,5,6$. Similar to Case I, since the coefficient $\sigma_t$ is a constant, we have evaluated $E(\bx,\by) = \exp(-\sigma_t|\bx-\by|)$ directly in the experiment. The numerical results are shown in Tab~\ref{TAB:2}. We can find that increasing the order $n$ of Chebyshev polynomial is not effectively reducing the approximation error, this is due to the deficiency of smoothness of the integral kernel in~\eqref{EQ: DISCRETIZED2} as explained in Remark~\ref{re:kernel}. 
\begin{table}[!htb]
    \centering
    \caption{The computational time and relative error between the solutions with and without the treecode algorithm under different interpolation orders $n$ and collocation points $M$. The treecode algorithm related parameter $\theta$ is fixed as $\theta=0.3$. }\vspace{0.1cm}
    \label{TAB:2}
    \begin{tabular}{c c c c c c c}
        \hline
        $M$ & $n$ & $\theta$ & $t_{dir}(s)$ & $t_{tree}(s)$ & $E_{\ell^{2}}$ \rule{0pt}{2.6ex}\rule[-1.2ex]{0pt}{0pt}\\
        \hline
        \rule{0pt}{4ex}$2,304$ & $2$ & $0.3$ & $24.30$ & $3.08$ & $1.54\times 10^{-2}$ \\
        $2,304$ & $3$ & $0.3$ & $24.30$ & $4.86$ & $4.48\times 10^{-3}$ \\
        $2,304$ & $4$ & $0.3$ & $24.30$ & $6.03$ & $4.47\times 10^{-3}$\\
        $2,304$ & $5$ & $0.3$ & $24.30$ & $7.89$ & $4.48\times 10^{-3}$\\
        $2,304$ & $6$ & $0.3$ & $24.30$ & $9.36$ & $6.53\times 10^{-4}$\\
        \vspace{0.2cm}\\
        $4,096$ & $2$ & $0.3$ & $95.15$ & $8.75$ & $1.18\times 10^{-2}$\\
        $4,096$ & $3$ & $0.3$ & $95.15$ & $14.17$ & $8.08\times 10^{-3}$\\
        $4,096$ & $4$ & $0.3$ & $95.15$ & $20.58$ & $4.80\times 10^{-3}$\\
        $4,096$ & $5$ & $0.3$ & $95.15$ & $24.82$ & $4.81\times 10^{-3}$\\
        $4,096$ & $6$ & $0.3$ & $95.15$ & $29.05$ & $4.81\times 10^{-3}$\\
        \vspace{0.2cm}\\
        $6,400$ & $2$ & $0.3$ & $300.7$ & $21.28$ & $1.32\times 10^{-2}$\\
        $6,400$ & $3$ & $0.3$ & $300.7$ & $36.12$ & $7.22\times 10^{-3}$\\
        $6,400$ & $4$ & $0.3$ & $300.7$ & $50.95$ & $7.21\times 10^{-3}$\\
        $6,400$ & $5$ & $0.3$ & $300.7$ & $61.36$ & $5.01\times 10^{-3}$\\
        $6,400$ & $6$ & $0.3$ & $300.7$ & $70.25$ & $5.01\times 10^{-3}$\\
        \hline
    \end{tabular}
\end{table}

\subsection{Experiment II}
In this experiment, we study the self-convergence tests on the accuracy of the solutions to~\eqref{EQ: DISCRETIZED2} with the treecode algorithm. We perform the numerical simulation with $\sigma_s \equiv 5.0$ and $\sigma_t\equiv 5.2$ for different cell sizes $\ell = \frac{1}{24(2k-1)}$ for $1\le k \le 8$ and different Chebyshev polynomial interpolation order: $n=3$, $n=4$, $n=5$. The solution at $k=8$ is taken as the reference solution, the numerical relative errors are evaluated using the $\ell^{2}$-norm  on the common collocation points at the coarsest level $k=1$. We can observe that the convergence is faster than linear (see Fig~\ref{FIG:1}). This is partly because the collocation points on the coarsest level are reasonably far from the boundary, thus the numerical solutions on these points are less affected by the boundary effect.

\begin{figure}[!htb]
    \centering
    \includegraphics[scale=0.4]{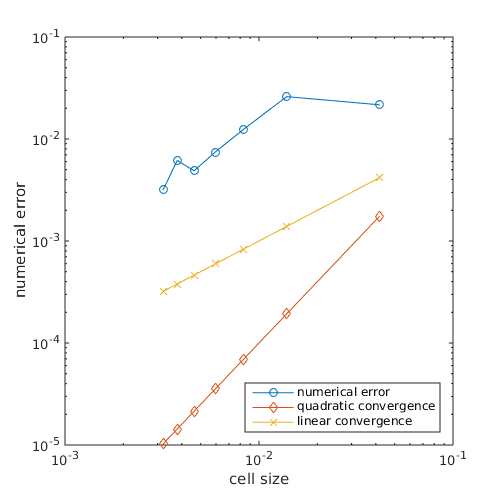}
    \includegraphics[scale=0.4]{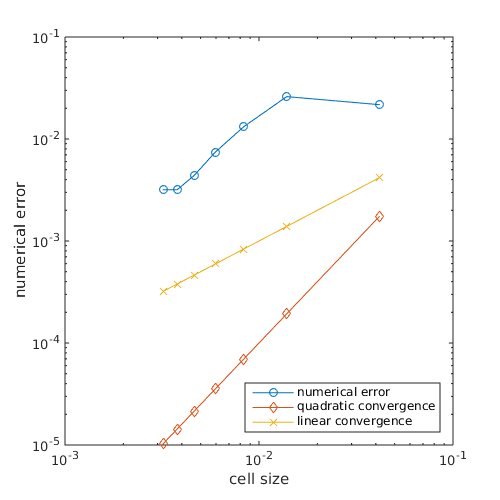}
    \includegraphics[scale=0.4]{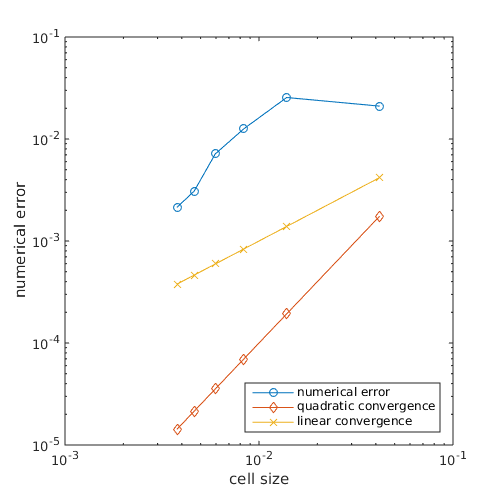}
    \caption{The numerical errors with respect to various grid sizes with $\sigma_t = 5.2$ and $\sigma_s = 5.0$. The relative $\ell^2$ error of the solutions are compared
        with the reference solution calculated at the finest level $k=8$.
        The Chebyshev polynomial interpolation order from left to right are:  $n=3$, $n=4$, $n=5$.}
    \label{FIG:1}
\end{figure}

\section{Concluding remarks}
\label{sec:conclusion}
In this work, we develop a fast algorithm to solve the time-dependent radiative transport equation in isotropic media. The method is based on the integral formulation~\eqref{EQ: INT EQ} and uses the treecode algorithm to accelerate the computation. Numerical experiments are performed to show the efficiency and accuracy of the algorithm. We emphasize that the integral formulation does not rely on the assumption of infinite homogeneous media, which is different from existing methods~\cite{tan2001integral,wu2000integral}. For inhomogeneous media, the treecode algorithm involves evaluations of path integrals for different pairs of $(\bx, \by)$. Although the computation cost is increased compared to that in the homogeneous case, those evaluations can be precomputed once and reused for each time step. The total computational cost is the same order as the case of homogeneous media, which is $\cO(M^{1+1/d}\log M)$, where $M$ is the number of collocation points in the physical space. 

The main contribution of this work is on the combination of the integral formulation~\eqref{EQ: INT EQ} and the treecode algorithm to accelerate the solution for the time-dependent radiative transport equation. We believe there are other ways to solve the time-dependent radiative transport equation efficiently, e.g., solving the equation in frequency domain with the idea from~\cite{candes2009fast}, which will be studied in our future work.

\section*{Acknowledgments}
Hongkai Zhao is partially supported by NSF DMS-1821010.

\bibliographystyle{siam}
\bibliography{main}

\begin{thebibliography}{10}

\bibitem{arridge2009optical}
{\sc S.~R. Arridge and J.~C. Schotland}, {\em Optical tomography: forward and
  inverse problems}, Inverse Problems, 25 (2009), p.~123010.

\bibitem{bal2016ultrasound}
{\sc G.~Bal, F.~J. Chung, and J.~C. Schotland}, {\em Ultrasound modulated
  bioluminescence tomography and controllability of the radiative transport
  equation}, SIAM Journal on Mathematical Analysis, 48 (2016), pp.~1332--1347.

\bibitem{barnes1986hierarchical}
{\sc J.~Barnes and P.~Hut}, {\em A hierarchical $o(n \log n)$ force-calculation
  algorithm}, Nature, 324 (1986), p.~446.

\bibitem{candes2009fast}
{\sc E.~Cand{\`e}s, L.~Demanet, and L.~Ying}, {\em A fast butterfly algorithm
  for the computation of fourier integral operators}, Multiscale Modeling \&
  Simulation, 7 (2009), pp.~1727--1750.

\bibitem{cecchi1999radiative}
{\sc C.~Cecchi-Pestellini, L.~Barletti, A.~Belleni-Morante, and S.~Aiello},
  {\em Radiative transfer in the stochastic interstellar medium}, Transport
  theory and statistical physics, 28 (1999), pp.~199--228.

\bibitem{chai2004three}
{\sc J.~C. Chai, P.~Hsu, and Y.~Lam}, {\em Three-dimensional transient
  radiative transfer modeling using the finite-volume method}, Journal of
  Quantitative Spectroscopy and Radiative Transfer, 86 (2004), pp.~299--313.

\bibitem{densmore2012hybrid}
{\sc J.~D. Densmore, K.~G. Thompson, and T.~J. Urbatsch}, {\em A hybrid
  transport-diffusion monte carlo method for frequency-dependent
  radiative-transfer simulations}, Journal of Computational Physics, 231
  (2012), pp.~6924--6934.

\bibitem{dutt1996fast}
{\sc A.~Dutt, M.~Gu, and V.~Rokhlin}, {\em Fast algorithms for polynomial
  interpolation, integration, and differentiation}, SIAM Journal on Numerical
  Analysis, 33 (1996), pp.~1689--1711.

\bibitem{egger2016class}
{\sc H.~Egger and M.~Schlottbom}, {\em A class of galerkin schemes for
  time-dependent radiative transfer}, SIAM Journal on Numerical Analysis, 54
  (2016), pp.~3577--3599.

\bibitem{elaloufi2002time}
{\sc R.~Elaloufi, R.~Carminati, and J.-J. Greffet}, {\em Time-dependent
  transport through scattering media: from radiative transfer to diffusion},
  Journal of Optics A: Pure and Applied Optics, 4 (2002), p.~S103.

\bibitem{fan2019fast}
{\sc Y.~Fan, J.~An, and L.~Ying}, {\em Fast algorithms for integral
  formulations of steady-state radiative transfer equation}, Journal of
  Computational Physics, 380 (2019), pp.~191--211.

\bibitem{fong2009black}
{\sc W.~Fong and E.~Darve}, {\em The black-box fast multipole method}, Journal
  of Computational Physics, 228 (2009), pp.~8712--8725.

\bibitem{gaggioli2019light}
{\sc E.~Gaggioli, O.~Bruno, and D.~Mitnik}, {\em Light transport with the
  equation of radiative transfer: The fourier continuation--discrete ordinates
  (fc--dom) method}, Journal of Quantitative Spectroscopy and Radiative
  Transfer, 236 (2019), p.~106589.

\bibitem{gao2009fast}
{\sc H.~Gao and H.~Zhao}, {\em A fast-forward solver of radiative transfer
  equation}, Transport Theory and Statistical Physics, 38 (2009), pp.~149--192.

\bibitem{gentile2001implicit}
{\sc N.~Gentile}, {\em Implicit monte carlo diffusion—an acceleration method
  for monte carlo time-dependent radiative transfer simulations}, Journal of
  Computational Physics, 172 (2001), pp.~543--571.

\bibitem{graziani2006computational}
{\sc F.~Graziani}, {\em Computational Methods in Transport}, vol.~48, Springer,
  2006.

\bibitem{guo2002monte}
{\sc Z.~Guo, J.~Aber, B.~A. Garetz, and S.~Kumar}, {\em Monte carlo simulation
  and experiments of pulsed radiative transfer}, Journal of Quantitative
  Spectroscopy and Radiative Transfer, 73 (2002), pp.~159--168.

\bibitem{guo2002three}
{\sc Z.~Guo and S.~Kumar}, {\em Three-dimensional discrete ordinates method in
  transient radiative transfer}, Journal of thermophysics and heat transfer, 16
  (2002), pp.~289--296.

\bibitem{henyey1941diffuse}
{\sc L.~G. Henyey and J.~L. Greenstein}, {\em Diffuse radiation in the galaxy},
  The Astrophysical Journal, 93 (1941), pp.~70--83.

\bibitem{jia2012gpu}
{\sc X.~Jia, J.~Sch{\"u}mann, H.~Paganetti, and S.~B. Jiang}, {\em Gpu-based
  fast monte carlo dose calculation for proton therapy}, Physics in Medicine \&
  Biology, 57 (2012), p.~7783.

\bibitem{kim2002chebyshev}
{\sc A.~D. Kim and M.~Moscoso}, {\em Chebyshev spectral methods for radiative
  transfer}, SIAM Journal on scientific computing, 23 (2002), pp.~2074--2094.

\bibitem{larsen1988grey}
{\sc E.~Larsen}, {\em A grey transport acceleration method far time-dependent
  radiative transfer problems}, Journal of Computational Physics, 78 (1988),
  pp.~459--480.

\bibitem{larsen1988neutronics}
{\sc E.~W. Larsen}, {\em Neutronics methods for thermal radiative transfer},
  Nuclear Science and Engineering, 100 (1988), pp.~255--259.

\bibitem{lewis1984computational}
{\sc E.~E. Lewis and W.~F. Miller}, {\em Computational methods of neutron
  transport},  (1984).

\bibitem{li2019inverse}
{\sc W.~Li, Y.~Yang, and Y.~Zhong}, {\em Inverse transport problem in
  fluorescence ultrasound modulated optical tomography with angularly averaged
  measurements}, arXiv preprint arXiv:1902.09638,  (2019).

\bibitem{liemert2011analytical}
{\sc A.~Liemert and A.~Kienle}, {\em Analytical solution of the radiative
  transfer equation for infinite-space fluence}, Physical Review A, 83 (2011),
  p.~015804.

\bibitem{liemert2017analytical}
{\sc A.~Liemert, D.~Reitzle, and A.~Kienle}, {\em Analytical solutions of the
  radiative transport equation for turbid and fluorescent layered media},
  Scientific reports, 7 (2017), p.~3819.

\bibitem{ling2018conservative}
{\sc D.~Ling, J.~Cheng, and C.-W. Shu}, {\em Conservative high order
  positivity-preserving discontinuous galerkin methods for linear hyperbolic
  and radiative transfer equations}, Journal of Scientific Computing, 77
  (2018), pp.~1801--1831.

\bibitem{mcclarren2009modified}
{\sc R.~G. McClarren and T.~J. Urbatsch}, {\em A modified implicit monte carlo
  method for time-dependent radiative transfer with adaptive material
  coupling}, Journal of Computational Physics, 228 (2009), pp.~5669--5686.

\bibitem{mokhtar1997mathematical}
{\sc M.~Mokhtar-Kharroubi}, {\em Mathematical Topics in Neutron Transport
  Theory: New Aspects}, vol.~46, World Scientific, 1997.

\bibitem{paasschens1997solution}
{\sc J.~Paasschens}, {\em Solution of the time-dependent boltzmann equation},
  Physical Review E, 56 (1997), p.~1135.

\bibitem{powell2019pseudospectral}
{\sc S.~Powell, B.~T. Cox, and S.~R. Arridge}, {\em A pseudospectral method for
  solution of the radiative transport equation}, Journal of Computational
  Physics, 384 (2019), pp.~376--382.

\bibitem{ren2015inverse}
{\sc K.~Ren, R.~Zhang, and Y.~Zhong}, {\em Inverse transport problems in
  quantitative pat for molecular imaging}, Inverse Problems, 31 (2015),
  p.~125012.

\bibitem{ren2019fast}
\leavevmode\vrule height 2pt depth -1.6pt width 23pt, {\em A fast algorithm for
  radiative transport in isotropic media}, Journal of Computational Physics,
  (2019), p.~108958.

\bibitem{ren2019separability}
{\sc K.~Ren, H.~Zhao, and Y.~Zhong}, {\em Separability of the kernel function
  in an integral formulation for anisotropic radiative transfer equation},
  arXiv preprint arXiv:1908.10467,  (2019).

\bibitem{tan2001integral}
{\sc Z.-M. Tan and P.-F. Hsu}, {\em An integral formulation of transient
  radiative transfer}, Journal of Heat Transfer, 123 (2001), pp.~466--475.

\bibitem{tillikainen20083d}
{\sc L.~Tillikainen, H.~Helminen, T.~Torsti, S.~Siljam{\"a}ki, J.~Alakuijala,
  J.~Pyyry, and W.~Ulmer}, {\em A 3d pencil-beam-based superposition algorithm
  for photon dose calculation in heterogeneous media}, Physics in Medicine \&
  Biology, 53 (2008), p.~3821.

\bibitem{vainikko2006multidimensional}
{\sc G.~Vainikko}, {\em Multidimensional weakly singular integral equations},
  Springer, 2006.

\bibitem{wang2019kernel}
{\sc L.~Wang, R.~Krasny, and S.~Tlupova}, {\em A kernel-independent treecode
  based on barycentric lagrange interpolation}, arXiv preprint
  arXiv:1902.02250,  (2019).

\bibitem{wu2000integral}
{\sc C.-Y. Wu and S.-H. Wu}, {\em Integral equation formulation for transient
  radiative transfer in an anisotropically scattering medium}, International
  Journal of Heat and Mass Transfer, 43 (2000), pp.~2009--2020.

\bibitem{zhao2019instability}
{\sc H.~Zhao and Y.~Zhong}, {\em Instability of an inverse problem for the
  stationary radiative transport near the diffusion limit}, SIAM Journal on
  Mathematical Analysis, 51 (2019), pp.~3750--3768.

\end{thebibliography}

\end{document}